\newtheorem{lemma}{Lemma}[section] 
\newtheorem{prop}[lemma]{Proposition}
\newtheorem{example}[lemma]{Example}
\newtheorem{theorem}[lemma]{Theorem}
\newtheorem{cor}[lemma]{Corollary}
\newtheorem{defn}[lemma]{Definition}
\newtheorem{remark}[lemma]{Remark}
\newcommand{\C}{\mathbb{C}}
\newcommand{\R}{\mathbb{R}}
\newcommand{\extd}{\mathrm{d}}
\newcommand{\di}{\,\mathrm{d}}
\newcommand{\tens}{\mathop{\otimes}}
\newcommand{\<}{\langle}
\renewcommand{\>}{\rangle}
\title[From homotopy to It\^o calculus and Hodge theory]{From homotopy to It\^o calculus and Hodge theory 
}
\author{G.\ Alhamzi, E.J.\,Beggs \& A.D.\ Neate}
\date{}                                           
\begin{document}
\maketitle

\begin{abstract}

We begin with a deformation of a differential graded algebra by adding time and using a homotopy. It is shown that the standard formulae of It\^o calculus are an example, with four caveats: First, it says nothing about probability. Second, it assumes smooth functions. Third, it deforms all orders of forms, not just first order. Fourth, it also deforms the product of the DGA. 
An isomorphism between the deformed and original DGAs may be interpreted as the  transformation rule between the Stratonovich and classical calculus (again no probability). The isomorphism can be used to construct covariant derivatives with the deformed calculus.
We apply the deformation in noncommutative geometry, to the Podle\'s sphere $S^2_q$. This involves the Hodge theory of $S^2_q$. 

\end{abstract}

\section{Introduction}

In this paper we consider a deformation of a differential calculus. The idea is very general, but we try to keep to concrete examples that should be familiar to anyone with a knowledge of differential calculus on manifolds. The exception is that we later look at the calculus on the noncommutative sphere $S^2_q$. This is done partly out of interest, and partly to make a point: There is no assumption that the `space' behind the construction should be an ordinary manifold, we can carry out the construction in \textit{noncommutative geometry}. As this paper crosses more than one subject, we will endeavour to explain ideas clearly, and we apologise to experts in advance for this. 

The idea of a \textit{differential graded algebra} (DGA for short) may be best expressed to most readers by taking the usual de Rham complex of differential forms on a manifold, with differential $\extd$ and product $\wedge$. However it is familiar in different subjects with other examples, e.g.\ the singular cochains of algebraic topology or the Dolbeault complex of algebraic geometry. The method we describe applies to such cases, but it is not obvious to the authors what examples would be interesting for these cases, so we concentrate on the de Rham complex. 

The ingredients for the deformation are a DGA with a \textit{homotopy} $\delta$ (i.e.\ a map on the DGA reducing degree by one). Such a homotopy on a DGA is directly related to the idea of homotopy in topology, which is how cohomology theories are shown to be homotopy invariant. First an extra coordinate, which we label `time', is added in an essentially trivial manner, and then the extended DGA is deformed by the homotopy. As an example, we construct the well known  It\^o calculus for diffusions as a homotopy deformation of the usual calculus. 
 It should be noted that by `constructing' the It\^o calculus we simply mean obtaining the formulae of It\^{o} calculus, there is no idea of a probabilistic derivation. 
However the homotopy deformation gives a calculus to all orders of differential forms, and deforms both the differential and the product. We get a differential which has properties in common with the It\^o differential, but has no probabilistic interpretation. Together with the deformed product we get a DGA isomorphic to the classical calculus, and we argue that this corresponds to the Stratonovich integral. Note that this deformed product is commutative.
This isomorphism is used in Section~\ref{vcaisuvfiuyt} to construct a covariant derivative for the deformed calculus. 
In Section \ref{bcuyowiiygccjbehb} we consider a novel application of this approach to proving the path independence of the Girsanov change of measure \cite{Truman} , which from our point of view is interpreted as a cohomology calculation, using the invariance of cohomology to the deformation given in Corollary~\ref{vcyudksyufvj}.

From Section~\ref{vcadisuvuid} we begin the second part of the paper, which is a noncommutative application of the first part and its relation to Hodge theory.
As a specific example of the deformation, we use the standard Podle\'s noncommutative sphere $S^2_q$ \cite{pod87}. The initial reason is that the machinery allows us to take noncommutative examples, and the classical cases have been covered in many places. Indeed we do get a Laplace operator $\Delta$ on $S^2_q$, and we can list 
the eigenfunctions for $\Delta$. This is not new, it was done in  \cite{pod87}. However the homotopy machinery should allow us to calculate $\Delta$ for all forms, and write the corresponding eigen-$n$-forms. However when we try, we find highly non-unique results. The problem is that the formula for $\delta$ involves an interior product $X\,\righthalfcup\,\xi\in \Omega^{n-1}S^2_q$ for a vector field $X$ and $\xi\in\Omega^nS^2_q$. There is no problem defining vector fields, but there is, as yet, no well defined idea (i.e.\ no sensible unique definition) of the interior product for $n>1$. 

There is a way out of this problem, involving Hodge theory. We follow the classic account of Hodge theory in \cite{GHalgGeom, vois}. We give an explicit Hodge operation $\diamondsuit$ (we do not use the star symbol for this to avoid confusion with the star of $C^*$ algebras, which extends to forms in a different way). Now we can insist that $\delta$ is the \textit{codifferential} corresponding to $\extd$ under the Hodge operation.  Doing this allows us to find a formula for the interior product for $n>1$, and so we can list the eigen-$n$-forms. But now we can essentially do all of (real) Hodge theory on $S^2_q$, including the Hodge operation, the pairings of forms, and the projections to harmonic forms. This should not be too surprising, as general Hodge theory involves elliptic operators, and there are associated heat kernel methods. The idea, which may have some chance of extension to noncommutative geometry in some level of generality, is the following: If the eigenvalues of $\Delta$ are all of one sign, then the heat diffusion of the $n$-forms should `tend to' (insert appropriate convergence) a harmonic form (i.e.\ in the zero eigenspace). However, if we start with a closed form, the de Rham cohomology class of the $n$-form will be conserved, so we should get a projection to harmonic forms preserving the cohomology. The problem is carrying out the analysis in any generality for the noncommutative theory. 

We have so far not mentioned probability. For an introduction to stochastic analysis and It\^{o} calculus for diffusions there are many standard texts for instance \cite{mckeanStoInt, IkeWat}. This theory has well known extensions to differential  manifolds, see \cite{ElJanLi,Elworthy1362,ElworthyCUP,IkeWat}.    For a geometric discussion on It\^o calculus, see \cite{MeyerSto}. 
There are existing ideas of It\^o calculus and Brownian motion in noncommutative geometry, and it would be interesting to see how they fit with the idea of homotopy deformation. There is a non-commutative theory of \emph{quantum stochastic differential equations} developed by Hudson and Parthasarathy \cite{partha} which has connections to Hopf algebras, see \cite{Hudson05,Hudson12,HudPar}. For quantum Brownian motions on quantum homogeneous spaces, see \cite{dasGos}.  Quantum stochastic
processes on the  noncommutative torus and  Weyl $C^*$ algebra are considered in  \cite{ChaGosSin}. It is useful to compare this paper to \cite{DimMulSto}, where a Moyal type product is used, resulting in a noncommutative calculus, which is then related to the It\^o calculus, including higher order differential forms. 
We emphasise that in this paper we are not performing It\^o calculus by noncommutative geometry. If the original DGA is graded commutative, as is the case for the classical de Rham complex, then the homotopy deformed DGA remains graded commutative. (The example of the noncommutative 2-sphere is not graded commutative of course, but then it was not graded commutative before the homotopy deformation.) In \cite{majid12} there is also a deformation related to It\^o calculus, but the motivation there is to deform the de Rham complex to a noncommutative DGA. Noncommutative heat kernels are considered in \cite{VassHeat,Gordina,ChaSun,CaGaReSu}.

Lest it be thought that the problem of the definition of the interior product was of little consequence, a special case of it is essentially the same problem as defining the Ricci curvature in terms of the Riemann curvature, which is directly calculable from a covariant derivative. As there is, as yet, no general method to calculate the Ricci curvature in noncommutative geometry, there is no direct way to write the Einstein equations of general relativity in noncommutative geometry. As one possibility for combining quantum theory and gravity is to use noncommutative geometry, this is a problem.

There are several studies of Hodge theory in noncommutative geometry, see \cite{GaetQGP}.  
A complex linear version of the Hodge operator for the noncommutative sphere was given in \cite{majspinsphere}. 
An antilinear Hodge operator for $\mathbb{CP}^2_q$ was given in \cite{DAnLandGeoQuyaProj}. 
For a discussion of Hodge structures in terms of motives, see \cite{CoMaMotiv}. For the relation with mirror symmetry, see \cite{KatKonPan}. 

We should point out a complication with the sign of the Laplacian. The Laplacian usually used in It\^o calculus is the Laplace-Beltrami operator, which is the usual sum of double derivatives on standard $\mathbb{R}^n$. In terms of functional analysis, this is a negative operator. The homotopy construction naturally gives the operator $\Delta=\delta\,\extd+\extd\,\delta$. If $\delta$ is the Hodge theory adjoint of $\extd$, this is a positive operator, the Hodge Laplacian. (On forms rather than functions, there are additional diffferences due to curvature.) In Section~\ref{cbhajksvckvkvv1} for applications to It\^o calculus we specify a $\delta$ giving the Laplace-Beltrami operator, which is therefore not the Hodge theory adjoint of $\extd$. In the noncommutative sphere example we adapt the same formula for $\delta$ which gives the 
Laplace-Beltrami operator. When we look at Hodge theory in Section~\ref{bcdilskjhcj}, we have to reconcile the sign conventions. In fact, as we never have to specify the constants $\alpha,\beta$ (not even their sign) in the metric (\ref{vchdksukjf}) on the noncommutative sphere, we keep the homotopy (\ref{cvdgsyjhtc}) in its original form.


The authors would like to thank Robin Hudson, Xue-Mei Li, Jiang-Lun Wu and Aubrey Truman for their assistance, and the organisers of the LMS Meeting and Workshop `Quantum Probabilistic Symmetries' in Aberystwyth in September 2012, where this work was first presented (though with the torus rather than sphere as a noncommutative example).

\section{Homotopy deformation}

\subsection{Preliminaries on differential graded algebras}
A \textit{differential graded algebra} (DGA for short) $(F^*,\extd,\wedge)$ is given by vector spaces (over $\mathbb{R}$ or $\mathbb{C}$) $F^n$ for $n\ge 0$ (conventionally write $F^n=0$ for $n<0$), a linear map $\extd:F^n\to F^{n+1}$ (the \textit{differential}) and a bilinear map (an associative \textit{wedge product})
$\wedge:F^n\times F^m\to F^{n+m}$.  For $\xi\in F^n$ it will be convenient to write its \textit{grade} as $|\xi|=n$. 
The operations obey the rules
\begin{eqnarray}
\extd\,\extd\,=\,0\ ,\quad \extd(\xi\wedge\eta)\,=\,\extd(\xi)\wedge\eta+(-1)^{|\xi|}\,\xi\wedge\extd(\eta)\ .
\end{eqnarray}
The second rule is the \textit{graded derivation} property for $\extd$.
Note that though we have assumed associativity (i.e.\ $(\xi\wedge\eta)\wedge\zeta=\xi\wedge(\eta\wedge\zeta)$) we do not assume graded commutativity, even though it is true for the de Rham forms on a classical manifold, i.e.\ we do not assume that
\begin{eqnarray}  \label{cvaisuyvguifvui}
\xi\wedge\eta\,=\,(-1)^{|\xi|\,|\eta|}\,\eta\wedge\xi\ .
\end{eqnarray}

The main example we shall consider is $F^n=\Omega^nM$, the $n$-forms on a differential manifold $M$, with the usual differential and wedge product. Note that the product $\wedge:F^0\times F^0\to F^{0+0}$ makes $F^0$ into an algebra, but because we do not assume (\ref{cvaisuyvguifvui}), in general $F^0$ need not be a commutative algebra. For the de Rham complex on a differential manifold $M$, $F^0=\Omega^0M$ is the real or complex valued smooth functions on $M$. 
Note that the general definition of DGA does not need the vector space assumption or $F^n=0$ for $n<0$, but we find both convenient for this paper. 

As $F^0$ is an algebra and we have products $\wedge:F^n\times F^0\to F^{n}$ and $\wedge:F^0\times F^n\to F^{n}$, we have each $F^n$ being a \textit{bimodule} over $F^0$. This is just saying that we can multiply $n$-forms by functions in the usual de Rham complex, but in general, as (\ref{cvaisuyvguifvui}) may not hold, the products on the right and left may be different. Often we write $f.\xi$ or $\xi.f$ instead of using $\wedge$ for $f\in F^0$.

\subsection{Extending the DGA by time}
Given a differential manifold $M$, we can add an extra coordinate to get $M\times\mathbb{R}$. The extra coordinate we call \textit{time} $t$. If we write $F^n=\Omega^n M$, then the de Rham complex of $M\times\mathbb{R}$ has $n$-forms
\begin{eqnarray}\label{bvhvbv}
F_{\mathbb{R}}^n \ =\ F^n\tens C^\infty(\mathbb{R}) \ \bigoplus\ \Big(F^{n-1}
\tens C^\infty(\mathbb{R})\Big)\wedge \extd t\ .
\end{eqnarray}
To explain (\ref{bvhvbv}), the part of the right hand side before the \textit{direct sum} $\oplus$ is the $n$-forms in $\Omega^n(M\times\mathbb{R})$ which have no $\extd t$ component, and those with a $\extd t$ are after the $\oplus$. 
The symbol \textit{tensor product} $\otimes$ can simply be interpreted as `sums of products of'. Thus $F^n\tens C^\infty(\mathbb{R})$ consists of sums of elements of $\Omega^nM$ times functions of time $t\in\mathbb{R}$, so we have simply added variation with respect to time into the forms on $M$. An alternative phrasing would be to say that elements of $F^n\tens C^\infty(\mathbb{R})$ are time dependent elements of $\Omega^nM$, or functions of time taking values in    $\Omega^nM$   (ignoring technicalities on finiteness of sums or completing tensor products).

We take (\ref{bvhvbv}) in the case of a general DGA. Then the $F_{\mathbb{R}}$ have also the structure of a DGA
 $(F_{\mathbb{R}}^*,\extd_0,\wedge_0)$, with operations, for $\xi\in F^n\tens C^\infty(\mathbb{R})$ and $\eta\in F^m\tens C^\infty(\mathbb{R})$,
\begin{eqnarray}\label{bvhvbvyy}
\extd_0(\xi) &=& \extd \xi +(-1)^n\,\frac{\partial \xi}{\partial t}\wedge\extd t \ ,\cr
\extd_0(\xi\wedge\extd t) &=& \extd\xi\wedge\extd t\ ,\cr
\xi\wedge_0\eta &=& \xi\wedge \eta \cr
(\xi\wedge\extd t)\wedge_0 \eta &=& (-1)^m\, (\xi\wedge \eta)\wedge \extd t\ ,\cr
\xi\wedge_0 (\eta\wedge\extd t) &=&  (\xi\wedge \eta)\wedge \extd t\ ,\cr
(\xi\wedge\extd t)\wedge_0(\eta\wedge\extd t) &=& 0\ .
\end{eqnarray}
This is just the usual tensor product of differential graded algebras, where $C^\infty(\mathbb{R})$ has the usual differential calculus.  We should point out that the $\extd$ and $\wedge$ operations
in (\ref{bvhvbvyy}) are from the original $(F^*,\extd,\wedge)$, and are applied at fixed values of $t$ to the forms, so for example $(\extd\xi)(t)=\extd(\xi(t))$ and $(\xi\wedge \eta)(t)=\xi(t)\wedge \eta(t)$. The symbols $\extd_0$ and $\wedge_0$
 are used as we will later have a deformation parameter $\alpha$, and these operations correspond to $\alpha=0$.

\subsection{The homotopy deformation}
A \textit{homotopy} is a linear map $\delta:F^n\to F^{n-1}$ for all $n$ (remembering that $F^n=0$ for $n<0$). The reader may consult textbooks on algebraic topology to see how this corresponds to the idea of homotopy in topology. 
Let $\Delta=\delta\,\extd+\extd\,\delta:F^n\to F^{n}$. 
 By construction $\Delta$ is a cochain map, i.e.\ $\extd\,\Delta=\Delta\,\extd$. There is a deformation
 (with parameter $\alpha\in\mathbb{R}$) $(F_{\mathbb{R}}^*,\extd_\alpha,\wedge_\alpha)$
 of $(F_{\mathbb{R}}^*,\extd_0,\wedge_0)$ given by,
   for $\xi\in F^n\tens C^\infty(\mathbb{R})$ and $\eta\in F^m\tens C^\infty(\mathbb{R})$,
 \begin{eqnarray}\label{bvhvbvzz}
\extd_\alpha(\xi) &=& \extd \xi +(-1)^n\,\frac{\partial \xi}{\partial t}\wedge\extd t + (-1)^n\,\alpha\,\Delta(\xi)\wedge\extd t\ ,\cr
\extd_\alpha(\xi\wedge\extd t) &=& \extd\xi\wedge\extd t\ ,\cr
\xi\wedge_\alpha \eta &=& \xi\wedge \eta - (-1)^{n+m}\,\alpha\,\big(\delta(\xi\wedge \eta)-
\delta(\xi)\wedge \eta \cr
&& -\,(-1)^n\,\xi\wedge \delta(\eta)\big)\wedge\extd t\ , \cr
(\xi\wedge\extd t)\wedge_\alpha \eta &=& (-1)^m\, (\xi\wedge \eta)\wedge \extd t\ ,\cr
\xi\wedge_\alpha (\eta\wedge\extd t) &=&  (\xi\wedge \eta)\wedge \extd t\ ,\cr
(\xi\wedge\extd t)\wedge_\alpha(\eta\wedge\extd t) &=& 0\ .
\end{eqnarray}
 Note that if the original DGA $(F^*,\extd,\wedge)$ is graded commutative (i.e.\ (\ref{cvaisuyvguifvui}) holds), then so is the homotopy deformation
 $(F_{\mathbb{R}}^*,\extd_\alpha,\wedge_\alpha)$.

\begin{theorem}
The operations $\extd_\alpha$ and $\wedge_\alpha$ in (\ref{bvhvbvzz}) make $F_{\mathbb{R}}^*$ into a DGA.
\end{theorem}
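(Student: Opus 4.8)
The plan is to avoid verifying the three DGA axioms for $(\extd_\alpha,\wedge_\alpha)$ head-on, and instead to realise the deformed operations as the transport of the undeformed pair $(\extd_0,\wedge_0)$ — which is already a DGA, being the tensor product DGA noted after (\ref{bvhvbvyy}) — along an explicit degree-preserving linear isomorphism $\Phi$ of $F_{\mathbb{R}}^*$. Once I have shown $\extd_\alpha=\Phi\,\extd_0\,\Phi^{-1}$ and $\xi\wedge_\alpha\eta=\Phi(\Phi^{-1}\xi\wedge_0\Phi^{-1}\eta)$, the DGA axioms for $(\extd_\alpha,\wedge_\alpha)$ follow mechanically by conjugating those for $(\extd_0,\wedge_0)$: in particular $\extd_\alpha\,\extd_\alpha=\Phi\,\extd_0\,\extd_0\,\Phi^{-1}=0$, while associativity of $\wedge_\alpha$ and the graded Leibniz rule transport in the same way. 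My candidate is $\Phi(\xi)=\xi-(-1)^{|\xi|}\alpha\,\delta(\xi)\wedge\extd t$ for $\xi\in F^n\tens C^\infty(\mathbb{R})$, with $\Phi$ the identity on the $\extd t$-part. Writing $\Phi=\id+N$, where $N$ sends $\extd t$-free forms into $\extd t$-forms and annihilates $\extd t$-forms, we get $N^2=0$, so $\Phi$ is invertible with $\Phi^{-1}=\id-N$; this settles bijectivity at once.

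First I would check the differential. Applying (\ref{bvhvbvzz}) to $\Phi(\xi)$ and comparing with $\Phi$ applied to $\extd_0(\xi)$ from (\ref{bvhvbvyy}), the $\extd\xi$ and $\frac{\partial\xi}{\partial t}\wedge\extd t$ contributions match automatically, and the coefficient of $\alpha\,(\cdots)\wedge\extd t$ reduces to the single identity $\Delta(\xi)=\delta\,\extd(\xi)+\extd\,\delta(\xi)$, that is, exactly the definition of $\Delta$. Thus $\extd_\alpha\,\Phi=\Phi\,\extd_0$ holds with no further input; note that $\extd_\alpha\,\extd_\alpha=0$ then comes for free, and the cochain-map property $\extd\,\Delta=\Delta\,\extd$ is not required as a separate hypothesis here (it is itself a consequence of the definition of $\Delta$ together with $\extd\,\extd=0$).

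Next I would check multiplicativity, $\Phi(\xi\wedge_0\eta)=\Phi(\xi)\wedge_\alpha\Phi(\eta)$, splitting into cases by which of $\xi,\eta$ carries a $\extd t$. The mixed cases are quick: $\Phi$ fixes $\extd t$-forms, $\wedge_\alpha$ carries no $\alpha$-correction once an $\extd t$ is present, and any term with two $\extd t$ factors vanishes by the last line of (\ref{bvhvbvzz}). The substantive case is $\xi,\eta$ both $\extd t$-free. Here $\Phi(\xi)\wedge_\alpha\Phi(\eta)$ produces, to first order in $\alpha$ (the $\alpha^2$ term dies as a double $\extd t$), precisely the combination $\delta(\xi\wedge\eta)-\delta(\xi)\wedge\eta-(-1)^{|\xi|}\xi\wedge\delta(\eta)$ that appears in the $\wedge_\alpha$ formula, after applying the rules $(\,\cdot\,\wedge\extd t)\wedge_\alpha\eta=(-1)^{|\eta|}(\,\cdot\,\wedge\eta)\wedge\extd t$ and $\xi\wedge_\alpha(\,\cdot\,\wedge\extd t)=(\xi\wedge\,\cdot\,)\wedge\extd t$.

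The main obstacle is purely bookkeeping: fixing the sign $(-1)^{|\xi|}$ in $\Phi$ so that both intertwinings hold at once, and organising the sign cancellations in the substantive multiplicativity case. I expect that last computation — where the three terms from $\delta(\xi\wedge\eta)$, $\delta(\xi)\wedge\eta$ and $\xi\wedge\delta(\eta)$ must recombine with the correct signs to match the correction term in $\wedge_\alpha$ — to be the one genuinely delicate step; everything else is formal. A direct verification of associativity of $\wedge_\alpha$ is of course possible, but it multiplies the number of sign-laden cases, so I would favour the transport argument above.
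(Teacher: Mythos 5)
Your proposal is correct, but it proves the theorem by a genuinely different route from the paper. The paper's own proof is a direct verification of the three DGA axioms for $(\extd_\alpha,\wedge_\alpha)$: it checks $\extd_\alpha\,\extd_\alpha=0$, then the graded Leibniz rule, then associativity of $\wedge_\alpha$, each by a sign-laden case analysis. What you propose instead is to transport the undeformed DGA structure along the map $\Phi(\xi)=\xi-(-1)^{|\xi|}\alpha\,\delta(\xi)\wedge\extd t$ (identity on the $\extd t$-part), and this is precisely the map $\mathcal{I}$ that the paper introduces \emph{after} the theorem, in Section~\ref{cvsaucvrx\relax cvsaucxrx}, where it is shown to be an isomorphism from $(F_{\mathbb{R}}^*,\extd_0,\wedge_0)$ to $(F_{\mathbb{R}}^*,\extd_\alpha,\wedge_\alpha)$; indeed the two ``most difficult parts'' verified there are exactly your two intertwining identities, and your sign conventions agree with the paper's. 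So in effect you prove the Section~\ref{cvsaucxrx} result first and deduce the theorem as a corollary, which is logically sound: the intertwining computations use only the defining formulas (\ref{bvhvbvzz}), the definition $\Delta=\delta\,\extd+\extd\,\delta$ (you are right that the cochain-map property of $\Delta$ is not needed there), and the invertibility of $\Phi$ via your $N^2=0$ observation, so there is no circularity, and the axioms for $(\extd_\alpha,\wedge_\alpha)$ then follow by conjugation from the tensor-product DGA $(F_{\mathbb{R}}^*,\extd_0,\wedge_0)$. What your route buys is economy and a conceptual explanation -- the deformation is a DGA because it is the transport of structure along a bijection -- and it delivers the isomorphism and Corollary~\ref{vcyudksyufvj} (invariance of cohomology) as immediate byproducts; what the paper's order buys is a self-contained theorem whose statement and proof do not depend on exhibiting the isomorphism, with the isomorphism then presented as an additional result. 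One small caution: your appeal to Section~\ref{cvsaucxrx} cannot be cited as given, since the paper states it after the theorem; in your write-up you must carry out both intertwining computations in full (as you outline), not merely invoke them.
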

\noindent {\bf Proof:}\quad We take $\xi\in F^n\tens C^\infty(\mathbb{R})$ and $\eta\in F^m\tens C^\infty(\mathbb{R})$.
First check that $\extd_\alpha\,\extd_\alpha=0$. 
\begin{eqnarray*}
\extd_\alpha(\extd_\alpha(\xi)) &=& \extd_\alpha(\extd \xi) +(-1)^{n}\,\extd_\alpha\Big(\frac{\partial \xi}{\partial t}\Big)\wedge\extd t + (-1)^n\,\alpha\,\extd_\alpha(\Delta(\xi))\wedge\extd t\cr
&=& \extd^2 \xi+ (-1)^{n+1}\, \frac{\partial\, \extd\xi}{\partial t}\wedge\extd t
+ (-1)^{n+1}\,\alpha\,\Delta(\extd \xi)\wedge\extd t \cr
&&+\, (-1)^{n}\,\extd\Big(\frac{\partial \xi}{\partial t}\Big)\wedge\extd t
 + (-1)^n\,\alpha\,\extd(\Delta(\xi))\wedge\extd t\  .
\end{eqnarray*}
This vanishes for the following reasons. First, $\extd^2=0$ by definition. 
Second, as $\extd^2=0$ we have $\extd\,\Delta=\Delta\,\extd=\extd\,\delta\,\extd$. 
Third, partial $t$ derivative commutes with $\extd$ as they act on different factors of 
$F^n\tens C^\infty(\mathbb{R})$. 

Next check that $\extd_\alpha$ is a signed derivation for the product $\wedge_\alpha$. The only difficult case is the following:
\begin{eqnarray*}
\extd_\alpha(\xi\wedge_\alpha \eta) &=& \extd_\alpha(\xi\wedge \eta)\cr
&&  -\, (-1)^{n+m}\,\alpha\,\extd\big(\delta(\xi\wedge \eta)-
\delta(\xi)\wedge \eta-(-1)^n\,\xi\wedge \delta(\eta)\big)\wedge\extd t\cr
&=& \extd(\xi\wedge \eta)+
(-1)^{n+m}\,\frac{\partial (\xi\wedge\eta)}{\partial t}\wedge\extd t + (-1)^{n+m}\,\alpha\,\Delta(\xi\wedge\eta)\wedge\extd t\cr
&&  -\, (-1)^{n+m}\,\alpha\,\extd\big(\delta(\xi\wedge \eta)-
\delta(\xi)\wedge \eta-(-1)^n\,\xi\wedge \delta(\eta)\big)\wedge\extd t\cr
&=& \extd(\xi\wedge \eta)+
(-1)^{n+m}\,\frac{\partial (\xi\wedge\eta)}{\partial t}\wedge\extd t + (-1)^{n+m}\,\alpha\,\delta\,\extd(\xi\wedge\eta)\wedge\extd t\cr
&&  +\, (-1)^{n+m}\,\alpha\,\extd\big(
\delta(\xi)\wedge \eta+(-1)^n\,\xi\wedge \delta(\eta)\big)\wedge\extd t\ .
\end{eqnarray*}
Now calculate
\begin{eqnarray*}
\extd_\alpha(\xi)\wedge_\alpha\eta &=& 
 \extd \xi\wedge_\alpha\eta +(-1)^{n+m}\,\frac{\partial \xi}{\partial t}\wedge\eta\wedge\extd t + (-1)^{n+m}\,\alpha\,\Delta(\xi)\wedge\eta\wedge\extd t\cr
 &=& \extd \xi\wedge\eta+(-1)^{n+m}\,\alpha\,\big(\delta(\extd\xi\wedge\eta)-\delta\,\extd\xi\wedge\eta
 +(-1)^n\,\extd\xi\wedge\delta\eta
 \big)\wedge\extd t\cr
&&  +\,(-1)^{n+m}\,\frac{\partial \xi}{\partial t}\wedge\eta\wedge\extd t + (-1)^{n+m}\,\alpha\,\Delta(\xi)\wedge\eta\wedge\extd t\cr
 &=& \extd \xi\wedge\eta+(-1)^{n+m}\,\alpha\,\big(\delta(\extd\xi\wedge\eta)+\extd\,\delta\,\xi\wedge\eta
 +(-1)^n\,\extd\xi\wedge\delta\eta
 \big)\wedge\extd t\cr
&&  +\,(-1)^{n+m}\,\frac{\partial \xi}{\partial t}\wedge\eta\wedge\extd t \cr
\xi\wedge_\alpha \extd_\alpha(\eta) &=& \xi\wedge_\alpha \extd\eta +(-1)^m\,\xi\wedge\frac{\partial \eta}{\partial t}\wedge\extd t + (-1)^m\,\alpha\,\xi\wedge\Delta(\eta)\wedge\extd t\cr
&=& \xi\wedge \extd\eta + (-1)^{n+m}\,\alpha\,\big(\delta(\xi\wedge \extd\eta)-
\delta\xi\wedge \extd\eta  -(-1)^n\,\xi\wedge \delta\,\extd\eta\big)\wedge\extd t\cr
&& +\,(-1)^m\,\xi\wedge\frac{\partial \eta}{\partial t}\wedge\extd t + (-1)^m\,\alpha\,\xi\wedge\Delta(\eta)\wedge\extd t\cr
&=& \xi\wedge \extd\eta + (-1)^{n+m}\,\alpha\,\big(\delta(\xi\wedge \extd\eta)-
\delta\xi\wedge \extd\eta  +(-1)^n\,\xi\wedge \extd\,\delta\,\eta\big)\wedge\extd t\cr
&& +\,(-1)^m\,\xi\wedge\frac{\partial \eta}{\partial t}\wedge\extd t \ .
\end{eqnarray*}
Now we can write
\begin{eqnarray*}
\extd_\alpha(\xi\wedge_\alpha \eta) -\extd_\alpha(\xi)\wedge_\alpha\eta-(-1)^n\,\xi\wedge_\alpha \extd_\alpha(\eta) \ =\ 
(-1)^{n+m}\,\alpha\,\mathrm{temp}\wedge\extd t\ ,
\end{eqnarray*}
where
\begin{eqnarray*}
\mathrm{temp} &=& \extd\big(
\delta(\xi)\wedge \eta+(-1)^n\,\xi\wedge \delta(\eta)\big)
 - \big(\extd\,\delta\,\xi\wedge\eta
 +(-1)^n\,\extd\xi\wedge\delta\eta\big) \cr 
&& -\, (-1)^n\, \big(-
\delta\xi\wedge \extd\eta  +(-1)^n\,\xi\wedge \extd\,\delta\,\eta\big)\ ,
\end{eqnarray*}
and this vanishes as $\extd$ is a signed derivation. 

Now we need to show that $\wedge_\alpha$ is an associative product. Again we consider only the difficult case, where $\zeta\in F^p\tens C^\infty(\mathbb{R})$.
\begin{eqnarray*}
&& \zeta\wedge_\alpha(\xi\wedge_\alpha \eta) \cr &=& \zeta\wedge_\alpha\big(
\xi\wedge \eta - (-1)^{n+m}\,\alpha\,\big(\delta(\xi\wedge \eta)-
\delta(\xi)\wedge \eta  -(-1)^n\,\xi\wedge \delta(\eta)\big)\wedge\extd t
\big) \cr
&=& \zeta\wedge_\alpha(
\xi\wedge \eta) \cr &&  -\, (-1)^{n+m}\,\alpha\,\big(\zeta\wedge\delta(\xi\wedge \eta)-
\zeta\wedge\delta(\xi)\wedge \eta  -(-1)^n\,\zeta\wedge\xi\wedge \delta(\eta)\big)\wedge\extd t
\big) \cr
&=& \zeta\wedge
\xi\wedge \eta-(-1)^{p+n+m}\,\alpha\big(\delta(\zeta\wedge
\xi\wedge \eta)-\delta\zeta\wedge\xi\wedge \eta-(-1)^p\,\zeta\wedge\delta(\xi\wedge \eta)
\big)\wedge\extd t
 \cr &&  -\, (-1)^{n+m}\,\alpha\,\big(\zeta\wedge\delta(\xi\wedge \eta)-
\zeta\wedge\delta(\xi)\wedge \eta  -(-1)^n\,\zeta\wedge\xi\wedge \delta(\eta)\big)\wedge\extd t \cr
&=& \zeta\wedge
\xi\wedge \eta \cr
&& -\ (-1)^{p+n+m}\,\alpha\big(\delta(\zeta\wedge
\xi\wedge \eta)-\delta\zeta\wedge\xi\wedge \eta-(-1)^p\,\zeta\wedge\delta\xi\wedge \eta\cr
&& -\ (-1)^{p+n}\,\zeta\wedge\xi\wedge \delta\eta
\big)\wedge\extd t\ ,
\end{eqnarray*}
which is the same as
\begin{eqnarray*}
&& (\zeta\wedge_\alpha\xi)\wedge_\alpha \eta \cr &=&
\big(\zeta\wedge \xi - (-1)^{n+p}\,\alpha\,\big(\delta(\zeta\wedge \xi)-
\delta(\zeta)\wedge \xi  -(-1)^p\zeta\wedge \delta(\xi)\big)\wedge\extd t\big)\wedge_\alpha \eta \cr
&=&
(\zeta\wedge \xi)\wedge_\alpha \eta
 \cr && -\, (-1)^{n+p+m}\,\alpha\,\big(\delta(\zeta\wedge \xi)\wedge \eta-
\delta(\zeta)\wedge \xi\wedge \eta  -(-1)^p\,\zeta\wedge \delta(\xi)\wedge \eta\big)\wedge\extd t \cr
&=&
(\zeta\wedge \xi)\wedge_\alpha \eta
 \cr && -\, (-1)^{n+p+m}\,\alpha\,\big(\delta(\zeta\wedge \xi)\wedge \eta-
\delta(\zeta)\wedge \xi\wedge \eta  -(-1)^p\,\zeta\wedge \delta(\xi)\wedge \eta\big)\wedge\extd t \cr
&=&
(\zeta\wedge \xi)\wedge \eta - (-1)^{n+p+m}\,\alpha\,\big(
\delta(\zeta\wedge \xi\wedge \eta)-\delta(\zeta\wedge \xi)\wedge \eta-(-1)^{p+n}\,
\zeta\wedge \xi\wedge \delta\eta
\big)\wedge\extd t
 \cr && -\, (-1)^{n+p+m}\,\alpha\,\big(\delta(\zeta\wedge \xi)\wedge \eta-
\delta(\zeta)\wedge \xi\wedge \eta  -(-1)^p\,\zeta\wedge \delta(\xi)\wedge \eta\big)\wedge\extd t \ .\quad\blacksquare
\end{eqnarray*}

\section{Examples of homotopies}

\subsection{A homotopy giving diffusion}  \label{cbhajksvckvkvv1}
Start with a differential manifold $M$ with local coordinates $x^\mu$. 
We take 
\begin{eqnarray}  \label{cvdgsyjhtc}
\delta(\xi) &=& g^{\mu\nu}\,\frac{\partial}{\partial x^\mu}\, \righthalfcup\,\nabla_\nu (\xi)\ .
\end{eqnarray}
Here $\nabla_\nu$ is a covariant derivative on the manifold, and the symbol $\righthalfcup$ denotes 
the interior product of a vector field and an $n$-form, resulting in an $n-1$ form. For example, we have
\begin{eqnarray}
\frac{\partial}{\partial x^1}\, \righthalfcup\, (\extd x^1\wedge\extd x^2)\ =\ \extd x^2\ ,\quad
\frac{\partial}{\partial x^2}\, \righthalfcup\, (\extd x^1\wedge\extd x^2)\ =\ -\,\extd x^1\ .
\end{eqnarray}
In general, to find $\frac{\partial}{\partial x^i}\, \righthalfcup\, \xi$, 
use a permutation on $\xi$ to put $\extd x^i$ to the front, multiplying by its sign, then cancel the $\frac{\partial}{\partial x^i}$ on the front. The result is zero if there is no $\extd x^i$ in $\xi$.

Remembering that $\Omega^{-1}M=0$, we have the following formula for $\Delta$ on functions:
\begin{eqnarray}\label{cvhskvcxzdh}
\Delta(f) &=& \delta(\extd f)\ =\ \delta\big(\frac{\partial f}{\partial x^\kappa}\,\extd x^\kappa\big) \cr
&=& g^{\mu\nu}\,\frac{\partial}{\partial x^\mu}\, \righthalfcup\,\nabla_\nu \big(\frac{\partial f}{\partial x^\kappa}\,\extd x^\kappa\big) \cr
&=&  g^{\mu\nu}\,\frac{\partial}{\partial x^\mu}\, \righthalfcup\,\big(
\frac{\partial^2 f}{\partial x^\nu\,\partial x^\kappa}\,\extd x^\kappa+
\frac{\partial f}{\partial x^\kappa}\,\nabla_\nu (\extd x^\kappa)\big) \cr
&=&  g^{\mu\nu}\,\frac{\partial}{\partial x^\mu}\, \righthalfcup\,\big(
\frac{\partial^2 f}{\partial x^\nu\,\partial x^\kappa}\,\extd x^\kappa-
\frac{\partial f}{\partial x^\kappa}\,\Gamma^\kappa_{\nu\lambda}\,\extd x^\lambda\big) \cr
&=&  g^{\mu\nu}\, \frac{\partial^2 f}{\partial x^\nu\,\partial x^\mu} - 
g^{\mu\nu}\, \frac{\partial f}{\partial x^\kappa}\,\Gamma^\kappa_{\nu\mu}\ .
\end{eqnarray}
Here we have used the usual Christoffel symbols $\Gamma^\kappa_{\nu\mu}$ for a covariant derivative on a tangent or cotangent bundle. If we take $g^{\mu\nu}$ to be a Riemannian metric
(i.e.\ a non-degenerate symmetric positive matrix in the given coordinate system) and 
$\nabla_\nu$ the associated Levi-Civita connection, then $\Delta$ is the 
Laplace operator. Remember that for the Levi-Civita connection,
\begin{eqnarray}
\Gamma^\kappa_{\nu\mu} &=& \frac12 \ g^{\kappa\lambda}\,\big(
\frac{\partial g_{\nu\lambda}}{\partial x^\mu} + \frac{\partial g_{\lambda\mu}}{\partial x^\nu} - \frac{\partial g_{\nu\mu}}{\partial x^\lambda}
\big)\ .
\end{eqnarray}
We compare (\ref{cvhskvcxzdh}) to the usual form of the Laplace operator,
\begin{eqnarray}
\Delta f &=& \frac1{\sqrt{|g|}}\ \frac{\partial }{\partial x^\nu}\Big(\sqrt{|g|}\,g^{\mu\nu}\,
\frac{\partial f}{\partial x^\mu} \Big)\cr
&=& g^{\mu\nu}\, \frac{\partial^2 f}{\partial x^\nu\,\partial x^\mu} + \Big(
\frac12\ g^{\mu\nu}\ \frac{\partial \log(|g|)}{\partial x^\nu}+ \frac{\partial g^{\mu\nu}}{\partial x^\nu}
\Big)\frac{\partial f}{\partial x^\mu}\cr
&=& g^{\mu\nu}\, \frac{\partial^2 f}{\partial x^\nu\,\partial x^\mu} + \Big(
\frac12\ g^{\kappa\lambda}\ \frac{\partial \log(|g|)}{\partial x^\lambda}+ \frac{\partial g^{\kappa\nu}}{\partial x^\nu}
\Big)\frac{\partial f}{\partial x^\kappa}
\end{eqnarray}
where $|g|$ is the determinant of the matrix $g_{\nu\mu}$. Now we use the following formulae for an invertible matrix valued function $A(x)$ of a variable $x$,
\begin{eqnarray}
\frac{\extd \log\det A}{\extd x} \ =\ \mathrm{trace}\Big(A^{-1}\,\frac{\extd A}{\extd x}\Big)\ ,\quad 
\frac{\extd (A^{-1})}{\extd x}\ =\ -\, A^{-1}\,  \frac{\extd A}{\extd x}    \, A^{-1}\ .
\end{eqnarray}
From these we can verify that the $\Delta$ in the deformed algebra (see (\ref{cvhskvcxzdh})) is indeed the Laplace operator applied to functions, as follows:
\begin{eqnarray}
\frac12\ g^{\kappa\lambda}\ \frac{\partial \log(|g|)}{\partial x^\lambda}+ \frac{\partial g^{\kappa\nu}}{\partial x^\nu} &=& 
\frac12\ g^{\kappa\lambda}\, g^{\mu\nu}\, \frac{\partial g_{\nu\mu}}{\partial x^\lambda} -
g^{\kappa\lambda}\,\frac{\partial g_{\lambda\mu}}{\partial x^\nu}\,g^{\mu\nu} \cr
&=& \frac12\ g^{\kappa\lambda}\ \Big(
g^{\mu\nu}\, \frac{\partial g_{\nu\mu}}{\partial x^\lambda} -
\frac{\partial g_{\lambda\mu}}{\partial x^\nu}\,g^{\mu\nu}
-\frac{\partial g_{\lambda\nu}}{\partial x^\mu}\,g^{\nu\mu}\Big) \cr
&=& -\, g^{\mu\nu}\, \Gamma^\kappa_{\nu\mu}\ .
\end{eqnarray}

\subsection{A homotopy giving drift}  \label{cbhajksvckvkvv2}
Take a vector field $ v = \sum\limits_{a}v^{a}\,\frac{\partial}{\partial x^{a}} $ and write, for a form $\xi$
\begin{equation} \label{vucgkklycf}
\delta_v(\xi)= v \,\righthalfcup\, \xi = v^{a}\,\big(\frac{\partial}{\partial x^{a}}\righthalfcup\, \xi\big).
\end{equation}
Now for a function $ f $ 
\begin{align*}
\Delta_v(f)= \delta df =v^{a}\big(\frac{\partial}{\partial x^{a}}\,\righthalfcup \, \extd x^{b}\cdot\frac{\partial f}{\partial x^{b}}\big)=v^{a}\,\frac{\partial f}{\partial x^{a}}\ ,
\end{align*}
which is the derivative of $ f $ in the direction of the vector field $ v $. For a $ 1 $-form
\begin{align*}
\Delta(\eta)&=\Delta\big(\eta_{b}\,\extd x^{b}\big)\,=\,
\extd\delta \big (\eta_{b}\,\extd x^{b}\big)+ \delta \extd \big(\eta_{b}\,\extd x^{b}\big)\\
&=\extd \big (v^{a}\eta_{a}\big)+\delta\big(\frac{\partial \eta_{b}}{\partial x^{c}}\,\extd x^{c}\wedge \extd x^{b}\big)\\
&= \frac{\partial \big (v^{a}\eta_{a}\big) }{\partial x^{b}}\,\extd x^{b}+v^{a}\,\frac{\partial}{\partial x^{a}}\,\righthalfcup\,\big(\frac{\partial \eta_{b}}{\partial x^{c}}\,\extd x^{c}\wedge \extd x^{b}\big)\\
&= \frac{\partial \big (v^{a}\eta_{a}\big) }{\partial x^{b}}\,\extd x^{b}+v^{a}\,\frac{\partial \eta_{b}}{\partial x^{a}}\,\extd x^{b}- v^{a}\,\frac{\partial \eta_{a}}{\partial x^{b}}\,\extd x^{b}\\
&= \frac{\partial v^{a} }{\partial x^{b}}\,\eta_{a}\,\extd x^{b}+v^{a}\,\frac{\partial \eta_{b}}{\partial x^{a}}\,\extd x^{b}.
\end{align*} 
This is the Lie derivative of the $ 1 $-form $ \eta $ along the vector field $ v $. 

\section{An isomorphism of differential graded algebras} \label{cvsaucxrx}
There is a map $\mathcal{I}:F_{\mathbb{R}}^n\to F_{\mathbb{R}}^n$ given by (for $\xi\in F^n\tens C^\infty(\mathbb{R})$)
\begin{eqnarray}
\mathcal{I}(\xi) &=& \xi-(-1)^n\,\alpha\,\delta(\xi)\wedge \extd t\ ,\cr
\mathcal{I}(\xi\wedge\extd t) &=& \xi\wedge\extd t\ ,
\end{eqnarray}
with the property that $\mathcal{I}$ gives an isomorphism from 
$(F_{\mathbb{R}}^*,\extd_0,\wedge_0)$ to $(F_{\mathbb{R}}^*,\extd_\alpha,\wedge_\alpha)$. The two most difficult parts to check are, for $\eta\in F^m\tens C^\infty(\mathbb{R})$,
\begin{eqnarray}
\mathcal{I}(\xi)\wedge_\alpha \mathcal{I}(\eta) &=& \xi  \wedge_\alpha \eta - (-1)^m\,\alpha\,\xi\wedge\delta(\eta)\wedge\extd t - (-1)^n\,\alpha\,\delta(\xi)\wedge\extd t\wedge\eta \cr
&=& \xi  \wedge_\alpha \eta - (-1)^m\,\alpha\,\xi\wedge\delta(\eta)\wedge\extd t - (-1)^{n+m}\,\alpha\,\delta(\xi)\wedge\eta\wedge\extd t \cr
&=&  \xi  \wedge_0 \eta - (-1)^{n+m}\,\alpha\,\delta(\xi\wedge_0\eta)\wedge\extd t \cr
&=& \mathcal{I}(\xi\wedge_0\eta)\ ,
\end{eqnarray}
and
\begin{eqnarray}
\extd_\alpha\,\mathcal{I}(\xi) &=& \extd_\alpha\xi-(-1)^n\,\alpha\,\extd_\alpha(\delta(\xi)\wedge\extd t)
\cr
&=& \extd_0\xi+(-1)^n\,\alpha\,\Delta(\xi)\wedge\extd t-(-1)^n\,\alpha\,\extd\delta(\xi)\wedge\extd t\cr
&=& \extd_0\xi+(-1)^n\,\alpha\,\delta\extd(\xi)\wedge\extd t\cr
&=& \mathcal{I}(\extd_0\xi)\ .
\end{eqnarray}
It is easy to see that the inverse of $\mathcal{I}$ is given by the formulae
\begin{eqnarray}
\mathcal{I}^{-1}(\xi) &=& \xi+(-1)^n\,\alpha\,\delta(\xi)\wedge \extd t\ ,\cr
\mathcal{I}^{-1}(\xi\wedge\extd t) &=& \xi\wedge\extd t\ .
\end{eqnarray}

As we shall see in Section~\ref{vcaisuvfiuyt}, some results obtained by using the map $\mathcal{I}$ look rather strange. The reason is quite simple: $\mathcal{I}$ changes the module structure, so the definition of multiplying (for example) functions by forms changes.

The cohomology of the DGA $(F_{\mathbb{R}}^*,\extd_\alpha,\wedge_\alpha)$ is defined to be
\begin{eqnarray}
H^n(F_{\mathbb{R}}^*,\extd_\alpha)\,=\,\frac{\mathrm{kernel}\, \extd_\alpha:F_{\mathbb{R}}^n\to F_{\mathbb{R}}^{n+1}}
{\mathrm{image}\, \extd_\alpha:F_{\mathbb{R}}^{n-1}\to F_{\mathbb{R}}^{n}}\ .
\end{eqnarray}
 The wedge product of forms gives a product on the cohomology.
If $F_{\mathbb{R}}^*=\Omega^*(M\times\mathbb{R})$ with the usual differential, then 
$H^n(F_{\mathbb{R}}^*,\extd_0)$ is the de Rham cohomology
$H^n_{dR}(M\times\mathbb{R})$.

\begin{cor} \label{vcyudksyufvj}
The cohomology of the DGA $(F_{\mathbb{R}}^*,\extd_0,\wedge_0)$ is the same as that of
$(F_{\mathbb{R}}^*,\extd_\alpha,\wedge_\alpha)$.
\end{cor}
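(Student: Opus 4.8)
The plan is to read the statement off directly from the isomorphism $\mathcal{I}$ constructed in Section~\ref{cvsaucxrx}. The two displayed computations there show that $\mathcal{I}:F_{\mathbb{R}}^*\to F_{\mathbb{R}}^*$ is an isomorphism of DGAs from $(F_{\mathbb{R}}^*,\extd_0,\wedge_0)$ to $(F_{\mathbb{R}}^*,\extd_\alpha,\wedge_\alpha)$; in particular the relation $\extd_\alpha\,\mathcal{I}=\mathcal{I}\,\extd_0$ exhibits $\mathcal{I}$ as an isomorphism of cochain complexes, and the explicit inverse $\mathcal{I}^{-1}$ satisfies the mirror relation $\extd_0\,\mathcal{I}^{-1}=\mathcal{I}^{-1}\,\extd_\alpha$ (which one checks by the same calculation). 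So essentially all the work is already done, and the corollary reduces to the standard homological fact that a cochain isomorphism induces an isomorphism on cohomology.

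Concretely, I would first verify that $\mathcal{I}$ carries $\extd_0$-cocycles to $\extd_\alpha$-cocycles: if $\extd_0\,\xi=0$ then $\extd_\alpha\,\mathcal{I}(\xi)=\mathcal{I}(\extd_0\,\xi)=0$. Next, $\mathcal{I}$ carries $\extd_0$-coboundaries to $\extd_\alpha$-coboundaries: if $\xi=\extd_0\,\zeta$ then $\mathcal{I}(\xi)=\mathcal{I}(\extd_0\,\zeta)=\extd_\alpha\,\mathcal{I}(\zeta)$. Hence $\mathcal{I}$ sends $\ker\extd_0$ into $\ker\extd_\alpha$ and $\im\extd_0$ into $\im\extd_\alpha$, so it descends to a well-defined linear map $H^n(F_{\mathbb{R}}^*,\extd_0)\to H^n(F_{\mathbb{R}}^*,\extd_\alpha)$ in each degree. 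Running the identical two observations for $\mathcal{I}^{-1}$, using $\extd_0\,\mathcal{I}^{-1}=\mathcal{I}^{-1}\,\extd_\alpha$, produces a descended inverse, so the induced map on cohomology is an isomorphism in every degree $n$.

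There is no genuine obstacle here; the only point needing care is that the induced map on classes be well defined, which is exactly the cocycle/coboundary bookkeeping above. I would additionally note that, since $\mathcal{I}$ also intertwines the two products via $\mathcal{I}(\xi)\wedge_\alpha\mathcal{I}(\eta)=\mathcal{I}(\xi\wedge_0\eta)$, the induced cohomology isomorphism respects the wedge products passed to cohomology, so the two cohomology \emph{rings} are isomorphic, not merely the underlying graded vector spaces.
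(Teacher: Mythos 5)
Your proposal is correct and follows exactly the paper's argument: the paper's proof consists of the single line ``Use the isomorphism in this section,'' invoking the DGA isomorphism $\mathcal{I}$, and your write-up simply makes explicit the standard cocycle/coboundary bookkeeping (plus the compatibility with products) that this invocation implicitly relies on.
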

\noindent {\bf Proof:}\quad Use the isomorphism in this section.\quad$\blacksquare$

\medskip
 A way to use this result would be to say that if $H^n(F_{\mathbb{R}}^*,\extd_0)=0$, then also $H^n(F_{\mathbb{R}}^*,\extd_\alpha)=0$, so 
 \begin{eqnarray}
{\mathrm{kernel}\, \extd_\alpha:F_{\mathbb{R}}^n\to F_{\mathbb{R}}^{n+1}} \,=\, 
{\mathrm{image}\, \extd_\alpha:F_{\mathbb{R}}^{n-1}\to F_{\mathbb{R}}^{n}}\ .
\end{eqnarray}
We shall use this in Section~\ref{bcuyowiiygccjbehb}. 
However there is a caveat to this application: The de Rham cohomology is defined using smooth functions, and the standard results refer to that case. More care needs to be taken when using functions which are only finitely differentiable. We now look in a little more detail at the kernel of $\extd_\alpha$. 

\begin{prop}\label{prop6}
Suppose $ \xi \in F^{1}\tens C^{\infty} (\R)$ and $ a \in F^{0}\tens C^{\infty} (\R)$. Then:

1)\quad 
$ \extd _{\alpha}(\xi+a .\extd t)=0 $ if and only if $ \extd \xi =0 $ and $ \extd (a-\alpha\,\delta\,\xi)=\frac{\partial \xi}{\partial t} $.

2)\quad If $ \extd _{\alpha}(\xi+a .\extd t)=0 $ and for some $b \in F^{0}\tens C^{\infty} (\R)$ we have $\xi=\extd b$,  then $\extd(a-\alpha\,\delta\extd b-\frac{\partial b}{\partial t})=0$. 

3) Under the conditions for (2), if $F$ is connected (i.e.\ the kernel of $\extd:F^0\to F^1$ consists of constants times the identity) then there is $c \in F^{0}\tens C^{\infty} (\R)$ so that $\xi=\extd c$ and 
 $a-\alpha\,\Delta c-\frac{\partial c}{\partial t}=0$.

\end{prop}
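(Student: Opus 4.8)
The plan is to reduce everything to a direct computation of $\extd_\alpha(\xi+a\wedge\extd t)$ using the defining formulae (\ref{bvhvbvzz}), and then to read off the vanishing conditions by separating the $\extd t$-free part from the $\extd t$-part. For part (1), I would apply $\extd_\alpha$ to $\xi$ (grade $n=1$) and to $a\wedge\extd t$ (here $a$ has grade $0$, so the second line of (\ref{bvhvbvzz}) applies) and add, obtaining $\extd_\alpha(\xi+a\wedge\extd t)=\extd\xi+\big(\extd a-\frac{\partial\xi}{\partial t}-\alpha\,\Delta\xi\big)\wedge\extd t$. Here $\extd\xi$ lies in $F^2\tens C^\infty(\R)$ while the bracketed term lies in $\big(F^1\tens C^\infty(\R)\big)\wedge\extd t$, and these are independent summands of $F_{\mathbb{R}}^2$. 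Hence the expression vanishes iff $\extd\xi=0$ and $\extd a-\frac{\partial\xi}{\partial t}-\alpha\,\Delta\xi=0$ simultaneously. The last cosmetic step uses $\Delta\xi=\delta\extd\xi+\extd\delta\xi=\extd\delta\xi$ (valid because $\extd\xi=0$) to write $\alpha\,\Delta\xi=\extd(\alpha\,\delta\xi)$, recasting the second condition as $\extd(a-\alpha\,\delta\xi)=\frac{\partial\xi}{\partial t}$.

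Part (2) is then immediate. Specialising part (1) to $\xi=\extd b$ makes $\extd\xi=\extd\extd b=0$ automatic, and since $\partial_t$ commutes with $\extd$ (they act on different tensor factors) we have $\frac{\partial\xi}{\partial t}=\extd\frac{\partial b}{\partial t}$. The surviving condition $\extd(a-\alpha\,\delta\extd b)=\extd\frac{\partial b}{\partial t}$ rearranges at once to $\extd\big(a-\alpha\,\delta\extd b-\frac{\partial b}{\partial t}\big)=0$.

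Part (3) is where the real content lies, and the idea is to correct $b$ by a purely time-dependent function. Writing $\phi=a-\alpha\,\delta\extd b-\frac{\partial b}{\partial t}$, part (2) gives $\extd\phi=0$. Connectedness, applied at each fixed $t$ (recall $\extd$ acts only on the $F$-factor), forces $\phi(t)\in\ker(\extd\colon F^0\to F^1)=\R\cdot 1$, so $\phi=\psi(t)\cdot 1$ for some smooth $\psi$. I would then set $c=b+k(t)\cdot 1$ with $k$ an antiderivative of $\psi$ in the time variable. Adding a multiple of the identity that is constant on $F$ leaves $\extd c=\extd b=\xi$ unchanged, and since $\delta$ annihilates $0$-forms we get $\Delta c=\delta\extd c=\delta\extd b$. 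Finally $\frac{\partial c}{\partial t}=\frac{\partial b}{\partial t}+\psi(t)\cdot 1$, so $a-\alpha\,\Delta c-\frac{\partial c}{\partial t}=\phi-\psi(t)\cdot 1=0$, as required.

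The main obstacle is conceptual rather than computational: one must recognise that $b$ itself generally fails, because part (2) shows only that $\phi$ is closed, not that it is zero, and that the gauge freedom of adding a spatially-constant but time-dependent function to $b$—which leaves $\extd b$ and $\delta\extd b$ untouched while shifting $\frac{\partial b}{\partial t}$—is exactly what is needed to absorb $\phi$. Connectedness enters precisely to guarantee that this closed $0$-form discrepancy is a function of $t$ alone, so that it can be cancelled by integrating in $t$.
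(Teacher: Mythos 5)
Your proposal is correct and follows essentially the same route as the paper's proof: the same direct computation of $\extd_\alpha(\xi+a\,\extd t)$, the same use of $\Delta\xi=\extd\,\delta\,\xi$ when $\extd\xi=0$, and the same correction of $b$ by an antiderivative of the purely time-dependent closed $0$-form. Your write-up is in fact somewhat more careful than the paper's, notably in spelling out the direct-sum argument for the ``if and only if'' and the gauge-freedom interpretation in part (3).
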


\begin{proof}
We have
\begin{eqnarray*}
\extd _{\alpha}(\xi+a \,\extd t)&=& \extd \xi -\dfrac{\partial \xi}{\partial t} \wedge \extd t-\alpha\,\Delta(\xi) \wedge \extd t +\extd a \wedge \extd t,
\end{eqnarray*}
and if this is zero, we have $ \extd \xi =0 $ and $ (\extd a -\alpha\,\Delta(\xi)- \frac{\partial \xi}{\partial t})\wedge \extd t =0 $ so $ \extd \,a = \alpha\,\Delta(\xi)+ \frac{\partial \xi}{\partial t} $, and we have $ \Delta(\xi)=\delta\,\extd \xi+ \extd \delta\,\xi $, if $ \extd \xi =0 $, then $ \Delta(\xi)= \extd \delta\,\xi $, so  $ \extd a = \alpha\,\extd \delta\,\xi+ \frac{\partial \xi}{\partial t} $, and this is $ \extd (a - \alpha\, \delta\,\xi)= \frac{\partial \xi}{\partial t}$. For the second part, $\extd \frac{\partial b}{\partial t}=\frac{\partial \extd b}{\partial t}$. For the third part, the connectedness assumption gives $a-\alpha\,\delta\extd b-\frac{\partial b}{\partial t}=f(t)$, and we correct $b$ to give $c$ using the integral of $f(t)$.
\end{proof}

\section{It\^o and Stratonovich calculus}\label{kjychxjy}

\subsection{Homotopy deformation by diffusion and drift}  \label{vkujkhtdcty}
Use a linear combination of the $ \delta $ from Sections~\ref{cbhajksvckvkvv1} and \ref{cbhajksvckvkvv2}, with two parameters $ \alpha $ and $ \beta $ instead of just one as previously, and nothing else is affected.
We call $ \delta $ and $ \Delta $ from Section~\ref{cbhajksvckvkvv1} $ \delta_{\text{diff}} $ and $ \Delta_{\mathrm{diff}} $
(remember that $ \Delta_{\mathrm{diff}} $ is the usual Laplace operator), and from Section~\ref{cbhajksvckvkvv2} we use $ \delta_{\text{v}} $ and $ \Delta_{\text{v}} $.
Then we get 
\begin{eqnarray} \label{bvhdjlslv}
\extd_{\alpha\, \beta}\xi&=& \extd\xi + (-1)^{n}\,\frac{\partial \xi}{\partial t}\wedge \extd t+ (-1)^{n}\,\alpha \Delta_{\mathrm{diff}}(\xi)\wedge \extd t+  (-1)^{n}\beta \Delta_{\text{v}}(\xi)\wedge \extd t,\cr
\xi \wedge_{\alpha\beta}\eta &=& \xi\wedge\eta\cr
&&-(-1)^{n+m}\alpha \big(\delta_{\text{diff}}(\xi\wedge\eta)-\delta_{\text{diff}}(\xi)\wedge\eta-\,(-1)^{n}\,\xi\wedge \delta_{\text{diff}}(\eta)) \big)\wedge \extd t \cr 
&& -\,(-1)^{n+m}\beta \big(\delta_{\text{v}}(\xi\wedge\eta)-\delta_{\text{v}}(\xi)\wedge\eta-\,(-1)^{n}\xi\wedge \delta_{\text{v}}(\eta)) \big)\wedge \extd t.
\end{eqnarray}

In the case of function $f$ we have 
\begin{equation}
\extd_{\alpha\beta} f=\extd f +\frac{\partial f}{\partial t}+ \alpha \,\Delta_{\mathrm{diff}}\,f	\,\extd t+\beta\, v^{a}\frac{\partial  f}{\partial v^{a}}\, \extd t
\end{equation}
The product of a function and a $ 1 $-form $\eta$ is modified by
\begin{equation}\label{eq 6}
f\wedge_{\alpha \beta}\,\eta= f\,\eta+\alpha\big(\delta_{\text{diff}} (f\,\eta)- f .\delta_{\text{diff}}(\eta)\big)\wedge \extd t+\beta\big(\delta_{\text{v}}(f\,\eta)-f .\delta_{\text{v}}(\eta) \big)\wedge \extd t.
\end{equation}
Now $ \delta_{\text{v}}(f\eta)= f \cdot \delta_{\text{v}}\,\eta $ so the $ \beta $ term in (\ref{eq 6}) vanishes. However the $ \alpha $ term has 
\begin{align*}
\delta_{\text{diff}} (f\,\eta)- f \cdot\delta_{\text{diff}}\,(\eta)&=g^{\mu\nu}\,\frac{\partial}{\partial x^{\mu}}\,\righthalfcup\,\big(\nabla_{\nu} \,(f\,\eta)-f \cdot \nabla_{\nu}\,(\eta)\big)\\
&=g^{\mu\nu}\,\frac{\partial}{\partial x^{\mu}}\,\righthalfcup\,\big(\frac{\partial f}{\partial x^{\nu}} \, \eta\big)\\
&=\mathrm{grad}(f)\,\righthalfcup\, \eta
\end{align*}
 where grad is the usual gradient of a function, so the product of functions and 1-forms is deformed to
\begin{equation} \label{jkhgcxfjmngc}
\eta \wedge_{\alpha \beta}\,f \,=\, f\wedge_{\alpha \beta}\,\eta \,=\, f\,\eta+\alpha\, \big(\mathrm{grad}(f)\,\righthalfcup\, \eta\big)\, \extd t\ .
\end{equation}
In particular we consider the case $\alpha = 1/2$ and $\beta = 1$. We denote the corresponding  deformed derivative and product by $\di _I$ and $\wedge_I$.



\subsection{Brownian motion on the real line} \label{bcuopagvuiou}
We now consider It\^{o} calculus based on a real valued Brownian motion. 
Briefly (for the completely uninitiated and skipping all of the interesting details) It\^{o} calculus for diffusions is built on the construction of integrals with respect to a Brownian motion $B_t$. The aim is to construct integrals of the form $\int_0^t f(B_s,s) \di B_s$ for smooth functions $f$. The problem is that $B$ is nowhere differentiable, indeed it is of unbounded variation, but finite quadratic variation. As a consequence if one tries to construct the integral as a Riemann sum then the value of the integral depends on the point at which the function is sampled within the partition intervals. There are two conventions in common use, the It\^{o} and Stratonovich integrals defined respectively as suitable limits of Riemann sums of the form,
\begin{align*}
\int_0^t f(B_s,s) \di B_s& = \lim \sum f(B_{t_i},t_i)(B_{t_{i+1}}-B_{t_i}),\\
\int_0^t f(B_s,s) \circ \partial B_s & = \lim\sum \frac{1}{2}(f(B_{t_i})+f(B_{t_{i+1}})) (B_{t_{i+1}}-B_{t_i}).
\end{align*}
Using such integrals it is possible to define processes by writing stochastic differential equations. Indeed there is a chain rule for each form of integral - the It\^{o} formula in the It\^{o} case and the normal chain rule in the Stratonovich case.

As a simple example we first take $B_t$ to be Brownian motion on the real line, with time $t$. For what follows we define a process $X_t$ by the It\^{o} stochastic differential equation,
\[\di X_t =  \di B_t .\]
In stochastic differential equations this is merely short hand for the stochastic integral equation,
\[\int_0^t \di X_s = \int_0^t \di B_s,\]
which has the obvious trivial solution $X_t = B_t$. (We will consider more general processes $X_t$ later.)
Then for any smooth function $f= f(x,t)$ on $\mathbb{R}^2$ It\^{o}'s formula gives,
\begin{equation}\label{ito1}
\di f(X_t,t) =\left( \frac{\partial f}{\partial t}(X_t,t) + \frac{1}{2}\frac{\partial^2 f}{\partial x^2} (X_t,t)\right)  \di t + \frac{\partial f}{\partial x}(X_t,t) \di X_t. 
\end{equation}
(note the additional second order term which would not be in the usual chain rule).
If we denote the Stratonovich integral by $\circ\,\partial X_t$ then the It\^{o} and Stratonovich integrals are related to each other by,
\begin{eqnarray} \label{kucfgku}
f(X_t,t) \circ \partial X_t 
=f(X_t,t) \di X_t + \frac{1}{2} \frac{\partial f}{\partial x}(X_t,t)\di t.
\end{eqnarray}

Now consider the trivial example of our DGA given by a Riemannian manifold $M = \mathbb{R}$ with the usual metric together with the derivative $\di_{I}$ from Subsection~\ref{vkujkhtdcty} where $v=0$. 
We first note that our choice of homotopy $\delta = \frac{1}{2} \delta_{\mathrm{diff}} + \delta_{\mathrm{v}}$ gives,
\[\delta \mathrm{d} + \mathrm{d} \delta = \frac{1}{2}\Delta_M\]
where $\Delta_M$ denotes the Laplace operator on the manifold $M$. 

Now we consider the deformed derivative $\di_{I}$ acting on some smooth function $f(x,t)$ defined on $M\times \mathbb{R}$,
\begin{eqnarray}
\nonumber \extd_{I}f&=& \frac{\partial f}{\partial x}\,\extd x+ \frac{\partial f}{\partial t}\, \extd t+ \frac12\, \frac{\partial^2 f}{\partial x^{2}}\, \extd t\\
&=& \frac{\partial f}{\partial x}\,\extd_{I} x+ \left(\frac{\partial f}{\partial t}+ \frac12\, \frac{\partial^2 f}{\partial x^{2}}\right) \,  \extd_{I} t,\label{ito2}
\end{eqnarray}
where we have $\di x = \di_{I} x$ and $\di t = \di_{I} t$. 
Comparing \eqref{ito2} to the It\^{o} formula \eqref{ito1} we see that  the operator $\di_I$ is equivalent to the It\^{o} differential i.e. everywhere we have $\di$ in our It\^{o} calculus we want to have $\di_{I}$.  Moreover we substitute $x = X_t$ always; that is the calculus $\di_I$ gives the same answers as if we move on the manifold $M$ according to the diffusion $X_t$. 
Moreover,
\begin{eqnarray}  \label{kucfgku1}
f \wedge_{I} \di_{I} x = f \di_{I} x + \frac{1}{2} \frac{\partial f}{\partial x} \di_{I} t
\end{eqnarray}
Thus we can see that  the natural interpretation is for $\di_{I}$ to be the It\^{o} differential  whilst $f\wedge_{I} \di_{I} x$ denotes the Stratonovich integral.

\subsection{A general diffusion on $\mathbb{R}^n$}\label{RN}
Consider a diffusion $X_t$ in $\mathbb{R}^n$ defined by
\[\di X_t^i = v^i(X_t) \di t + \sigma^{ij}(X_t) \di B_t^j\]
where $B_t$ is an $\mathbb{R}^n$ valued Brownian motion.
Then for any smooth function $f$ on $\mathbb{R}^{n+1}$ It\^{o}'s formula gives,
\begin{align*}
\di f(X_t,t) &=\left( \frac{\partial f}{\partial t}  +v^i(X_t) \frac{\partial f}{\partial x^i}  + \frac{1}{2}a_{ij}(X_t)\frac{\partial^2 f}{\partial x^i\partial x^j}  \right)\di t  
+ \sigma_{ij}(X_t)\frac{\partial f}{\partial x^i} \di B_t^j \\
&=\left( \frac{\partial f}{\partial t}(X_t,t)  + \frac{1}{2}a_{ij}(X_t)\frac{\partial^2 f}{\partial x^i\partial x^j}(X_t,t)  \right)\di t  
+\frac{\partial f}{\partial x^i}(X_t,t)\di X_t^i 
\end{align*}
where $a\, =\, \sigma \sigma^T$.
We also recall the It\^{o} product formula,
\begin{align*}
\di (f(X_t) h(X_t)) &= f(X_t) \di h(X_t) + h(X_t) \di f(X_t) + \di f(X_t) \di h(X_t)\\
& =  f(X_t) \di h(X_t) + h(X_t) \di f(X_t)+g^{ij}(X_t)\frac{\partial f}{\partial x^i}\frac{\partial h}{\partial x^j}\di t
\end{align*}
and note the relation between It\^{o} and Stratonovich integrals,
\[f^i \circ\partial X^i_t = f^i \di X_t^i + \frac{1}{2} \frac{\partial f^i}{\partial x^i} \di t\]

Now for our DGA we take $M = \mathbb{R}^n$ with the Riemannian metric $g_{ij} = (a^{-1})_{ij}$.
Again we have $\delta = \frac{1}{2} \delta_{\mathrm{diff}} + \delta_{\mathrm{v}}$ where now $v\not\equiv 0$. This gives,
\[\delta \di + \di \delta = \frac{1}{2} \Delta_M + v^i \frac{\partial}{\partial x^i} =\frac{1}{2} g^{ij}\frac{\partial^2 }{\partial x^i\partial x^j}- \frac{1}{2}g^{jk}\Gamma_{jk}^i\frac{\partial }{\partial x^i}
+ v^i \frac{\partial}{\partial x^i}.\]


Thus for a smooth function $f$ defined on $M\times \mathbb{R}$ we have,
\begin{align*}
\extd_{I} f&=\extd f +\left(\frac{\partial f}{\partial t}+ \frac{1}{2} \,\Delta_{M}\,f+ v^{i}\frac{\partial  f}{\partial x^{i}}\right)\di t\\
&=\frac{\partial f}{\partial x^i} \di x^i+\left(\frac{\partial f}{\partial t}+ \frac{1}{2} g^{ij}\frac{\partial^2 f}{\partial x^i\partial x^j} -\frac{1}{2} g^{jk}\Gamma_{jk}^i\frac{\partial f}{\partial x^i}+ v^{i}\frac{\partial  f}{\partial x^{i}}\right)\di t
\end{align*}
Again by applying this to $f(x) = x^l$ we have,
\[\di_{I} x^l =\di x^l+\left(v^l-\frac{1}{2} g^{jk}\Gamma_{jk}^l\right)\di t\]
and so,
\begin{align*}
\extd_{I} f
&=\frac{\partial f}{\partial x^i} \di_{I} x^i+\left(\frac{\partial f}{\partial t}+ \frac{1}{2} a_{ij} \frac{\partial^2 f}{\partial x^i \partial x^j}\right)\di t
\end{align*}

Also we note that,
\begin{align*}
f^i \wedge_I \di_I x^i
&= f^i \wedge_I \di x^l+\left(f^i \wedge \left(v^l-\frac{1}{2} g^{jk}\Gamma_{jk}^l\right)\right)\wedge\di t \\
&= f^i  \di x^i+\frac{1}{2} \frac{\partial f^i}{\partial x^i}\di t+f^i  \left(v^l-\frac{1}{2} g^{jk}\Gamma_{jk}^l\right)\di t \\
&= f^i  \di_I x^i+\frac{1}{2} \frac{\partial f^i}{\partial x^i}\di t,
\end{align*}
which can be again compared with the Stratonovich integral.

We can also consider the differential of a product of functions $f,h$:
\begin{eqnarray}
\extd_{I}(f\,h) &=& f\wedge_{I}\extd_{I} h+\extd_{I} f\wedge_{I} h 
\end{eqnarray}
and
\begin{align*}
f \wedge_I \di_I h& = 
f  \wedge_I \frac{\partial h}{\partial x^i} \di_I x^i+f\wedge_I \left( \frac{1}{2} g^{ij}\frac{\partial^2 h}{\partial x^i\partial x^j} \right)\di t\\
& = f  \frac{\partial h}{\partial x^i} \di_I x^i 
+ \frac{1}{2} g^{ik} \frac{\partial f}{\partial x^i}   \frac{\partial h}{\partial x^k}\di t +f \left( \frac{1}{2} g^{ij}\frac{\partial^2 h}{\partial x^i\partial x^j} \right)\di t\\
& = f  \di_I h
+ \frac{1}{2} g^{ik} \frac{\partial f}{\partial x^i}   \frac{\partial h}{\partial x^k}\di t.\end{align*}
Thus we have,
\begin{eqnarray}
\extd_{I}(f\,h) &=&f  \di_I h+ h \di_I f
+ g^{ik} \frac{\partial f}{\partial x^i}   \frac{\partial h}{\partial x^k}\di t.
\end{eqnarray}


%

\subsection{The It\^o-Stratonovich calculus}
As set out in Subsections ~\ref{bcuopagvuiou} and \ref{RN}, the differential $\extd_{I} $ (with $\alpha=\frac12$ and $\beta=1$) corresponds to the It\^o differential. However,
the differential graded algebra with $\extd_{I}$ and $\wedge_{I}$ actually corresponds to the Stratonovich calculus. 
The equation (\ref{kucfgku}) relating the It\^o and Stratonovich calculi corresponds to (\ref{kucfgku1}), and there
the RHS with ordinary multiplication and $\extd_{I}$ contains the It\^o terms and the LHS with 
$\extd_{I}$ and $\wedge_{I}$ contains the Stratonovich term. The same holds in the example in  Subsection \ref{RN}. We shall call the DGA with $\extd_{I}$ and $\wedge_{I}$ the It\^o-Stratonovich DGA. 

By Section~\ref{cvsaucxrx} the DGA with $\extd_{I}$ and $\wedge_{I}$ is isomorphic to the classical calculus, with 
$\extd_{0}$ and $\wedge_{0}$. This simply reflects the well known fact (in first order) that formulae involving the Stratonovich calculus are of the same form as the formulae of the classical differential calculus.

We note that as things stand  we have simply demonstrated it is possible to construct a deformation of a  differential graded algebra which happens to coincide with the formulas for It\^{o} calculus. However in the next section we hope to convince the reader there are interesting applications for this approach.

\subsection{An Application} \label{bcuyowiiygccjbehb}

Unlike in stochastic analysis we have a graded algebra consisting of forms of all orders. We now look at an application which takes advantage of this and the concept of cohomology to prove a recent result from stochastic analysis.

 Consider a diffusion,
\[\di X_t^i = -v^i(X_t) \di t + \sigma^{ij}(X_t) \di B_t^j\]
on a probability space $(\Omega,\mathcal{F},\mathbb{P})$.
In a recent paper \cite{Truman}, conditions were derived using It\^{o} calculus under which the Girsanov change of measure,
\[\exp\left(\int_0^t\langle \sigma^{-1}v ,\di B_s\rangle  - \frac{1}{2} \int_0^t\| \sigma^{-1}v\|^2 \di s \right)\]
is independent of the path of the process $X_s$ for $s\in[0,t]$.
That is, it was shown that there exists a function $f$ such that
\begin{equation}\label{JLWn}
\di f(X_t,t) =\langle \sigma^{-1}v ,\di B_t\rangle  - \frac{1}{2} \| \sigma^{-1}v\|^2 \di t 
\end{equation}
if and  only if $f, \sigma, v$ satisfy the PDEs,
\begin{equation}\label{iffna}
\left\{\begin{array}{rl}
\displaystyle
 \frac{1}{2} (\sigma \sigma^T)^{ij}\frac{\partial^2 f}{\partial x^i\partial x^j}
& = \displaystyle 
 \frac{1}{2}\frac{\partial f}{\partial x^i}v^i-\frac{\partial f}{\partial t}
\\[1em]
\displaystyle
\frac{\partial f}{\partial x^i} &=\displaystyle  ((\sigma\sigma^T)^{-1})^{ij}v^j
 \end{array}\right.
\end{equation}

For simplicity we define the matrix $a = \sigma \sigma^T$ and note that \eqref{iffna}  implies,
(differentiating the first equation with respect to $x^l$ and eliminating $f$),
\begin{align}
 \nonumber \frac{1}{2} a^{ij}\frac{\partial^2 }{\partial x^i\partial x^j}\left((a^{-1})^{lk}v^k\right)
 & = \displaystyle -\frac{1}{2} \frac{\partial a^{ij}}{\partial x^l}\frac{\partial}{\partial x^i}\left((a^{-1})^{jk}v^k\right)
+ \frac{1}{2}(a^{-1})^{ij}v^j\frac{\partial v^i}{\partial x^l}\\
&\qquad +\frac{1}{2}v^i\frac{\partial }{\partial x^i}\left((a^{-1})^{lj}v^j\right)-\frac{\partial }{\partial t}\left((a^{-1})^{lj}v^j\right)\label{differentiated}
\end{align}

We now show that this can be derived by considering a second order differential form in our It\^{o}-Stratonovich calculus.
 We consider the manifold $M = \mathbb{R}^n$ from Section \ref{RN} but with $v\mapsto -v$.
 
%


We note that the equation \eqref{JLWn} can be written in It\^{o} calculus in the form,
\[
\di f(X_t,t) 
= 
(a^{-1})^{ij}v^j\di X_t^i
 + \frac{1}{2}  (a^{-1})^{ij}v^i v^j\di t 
\]
Thus the equivalent question in our DGA is to ascertain when is the form,
\[\xi = g_{ij}v^j\di_{I}x^i
 + \frac{1}{2}  g_{ij}v^i v^j\di t \]
exact with respect to $\di_{I}$ where we recall that the Riemannian metric is give by $g_{ij} = (a^{-1})_{ij}$. Thus we consider when,
\[\di_{I}\xi =0.\]
(See the discussion after Corollary~\ref{vcyudksyufvj} on this point.) Note that here we are considering a second order form which has no corresponding concept in stochastic anlaysis. We proceed using the rules for the It\^{o}-Stratonovich DGA.
Since,
\[\di_{I} x^i =\di x^i-\left(v^i+\frac{1}{2} g^{lm}\Gamma_{lm}^i\right)\di t\]
we have,
\begin{align} 
\di_I \xi &= \extd\left(g_{ij}v^j\di x^i\right) 
-\frac{\partial }{\partial t}\left(g_{ij}v^j\right)\di x^i \wedge \extd t
-\frac12\, \Delta_{\mathrm{diff}}\left(g_{ij}v^j\di x^i\right)\wedge \extd t\nonumber\\
&\qquad- \Delta_{-v}\left(g_{ij}v^j\di x^i \right)\wedge \extd t
-\frac12\frac{\partial}{\partial x^k}\left(g_{ij}v^jv^i+g_{ij} g^{lm}\Gamma_{lm}^iv^j\right)\di x^k\wedge\di t.\label{closed}
\end{align}

Moreover,
\[ \Delta_{-{v}}\left(g_{ij}v^j\di x^i \right)
=-\frac{\partial v^i}{\partial x^k}g_{ij}v^j\di x^k
-v^k \frac{\partial}{\partial x^k}\left(g_{ij}v^j\right)\di x^i.\]
Finally.
\begin{align*}
\delta_{\mathrm{diff}}\di\left(g_{ij}v^j\di x^i \right)
& =g^{mn}
\frac{\partial}{\partial x^n\partial x^m}\left(g_{kj}v^j\right)\di x^k
-g^{mn}\frac{\partial}{\partial x^n\partial x^k}\left(g_{mj}v^j\right)\di x^k
+\kappa
\end{align*}
where $\kappa$ is the 1-form,
\[\kappa = g^{mn}\frac{\partial}{\partial x^m}  \righthalfcup
\left(\frac{\partial}{\partial x^k}\left(g_{ij}v^j\right)\nabla_n(\di x^k \wedge\di x^i)\right)\]
and
\begin{align*}
\di \delta_{\mathrm{diff}} \left(g_{ij}v^j\di x^i \right)  
& = \left(
\frac{\partial g^{mn}}{\partial x^l} \frac{\partial}{\partial x^n}\left(g_{mj}v^j  \right)
+g^{mn} \frac{\partial}{\partial x^l\partial x^n}\left(g_{mj}v^j  \right)
-\frac{\partial}{\partial x^l}\left(g^{mn}g_{pj}v^j\Gamma^p_{nm} \right)   \right)
\di x^l
\end{align*}
Thus,
\begin{align*}
\Delta_{\mathrm{diff}} \left(g_{ij} v^j \di x^i\right)& = 
\Bigg(
\frac{\partial g^{mn}}{\partial x^k} \frac{\partial}{\partial x^n}\left(g_{mj}v^j  \right)
-\frac{\partial}{\partial x^k}\left(g^{mn}g_{pj}v^j\Gamma^p_{nm} \right)   
+
g^{mn}
\frac{\partial}{\partial x^n\partial x^m}\left(g_{kj}v^j\right)
\Bigg)\di x^k\\
&\qquad
+\kappa
\end{align*}
Thus, from \eqref{closed}, we deduce that $\xi$ is closed if,
\begin{align} 
\nonumber 0
&= \frac{\partial}{\partial x^l}\left(g_{ij}v^j\right) \di x^l\wedge \di x^i-\frac{1}{2}\kappa\wedge\di t\\
\nonumber &\qquad 
+\Bigg(
-\frac{\partial }{\partial t}\left(g_{kj}v^j\right)
-\frac12\frac{\partial g^{mn}}{\partial x^k} \frac{\partial}{\partial x^n}\left(g_{mj}v^j  \right)
-\frac12
g^{mn}
\frac{\partial}{\partial x^n\partial x^m}\left(g_{kj}v^j\right)
\\
&\qquad\qquad\qquad
+\frac{\partial v^i}{\partial x^k}g_{ij}v^j
+v^i \frac{\partial}{\partial x^i}\left(g_{kj}v^j\right)
-\frac12\frac{\partial}{\partial x^k}\left(g_{ij}v^jv^i\right)
\Bigg)\di x^k\wedge\di t\label{closed2}
\end{align}

The first term gives us,
\[0 = \frac{\partial}{\partial x^l}\left(g_{ij}v^j\right)
-\frac{\partial}{\partial x^i}\left(g_{lj}v^j\right).\]
Now we note that
\[\nabla_n(\di x^k\wedge\di x^i)
 = - \Gamma_{nl}^k \di x^l \wedge \di x^i + \Gamma_{nl}^i \di x^l \wedge \di x^k\]
 so that,
 \begin{align*}
\kappa
& = g^{mn}\frac{\partial}{\partial x^m} \righthalfcup
\left( -\frac{\partial}{\partial x^k}\left(g_{ij}v^j\right) \Gamma_{nl}^k \di x^l \wedge \di x^i +\frac{\partial}{\partial x^k}\left(g_{ij}v^j\right) \Gamma_{nl}^i \di x^l \wedge \di x^k\right)\\
& = g^{mn}\left(\left(\frac{\partial}{\partial x^i}\left(g_{kj} v^j\right)- \frac{\partial}{\partial x^k}\left(g_{ij} v^j\right)\right)\Gamma^{k}_{nm} 
+ \left(\frac{\partial}{\partial x^k}\left(g_{mj} v^j\right)- \frac{\partial}{\partial x^m}\left(g_{kj} v^j\right)\right)\Gamma^{k}_{ni} 
\right)\di x^i\\
&=0.
 \end{align*}

Thus we deduce from \eqref{closed2} that $\xi$ is closed if and only if,
\begin{equation}\label{iffn2}
\left\{\begin{array}{rl}
\displaystyle
\frac12
g^{mn}
\frac{\partial}{\partial x^n\partial x^m}\left(g_{kj}v^j\right) & = \displaystyle
-\frac12\frac{\partial g^{mn}}{\partial x^k} \frac{\partial}{\partial x^n}\left(g_{mj}v^j  \right)
+\frac12g_{ij}v^j\frac{\partial v^i}{\partial x^k}
\\[1em]
&\qquad
\displaystyle
+\frac12v^i \frac{\partial}{\partial x^i}\left(g_{kj}v^j\right)-\frac{\partial }{\partial t}\left(g_{kj}v^j\right)
\\[1em]
0& =\displaystyle \frac{\partial}{\partial x^l}\left(g_{ij}v^j\right)-\frac{\partial}{\partial x^i}\left(g_{lj}v^j\right)
 \end{array}\right.
\end{equation}
These can be compared with equations \eqref{iffna} and \eqref{differentiated}. 
The second equation in  \eqref{iffn2}  is the condition for the existence of a function $f$ such that,
\[\di f = g_{ij} v^j \di x^i\qquad \Leftrightarrow \qquad
\frac{\partial f}{\partial x^i} = g_{ij} v^j,\qquad \Leftrightarrow \qquad
\nabla_i f = v^i.\]
It is now clear where the gradient condition arises - it is an immediate consequence of  equation \eqref{closed}.

\begin{remark}
We also note that the PDEs \eqref{iffna} and \eqref{iffn2} are closely related to the heat equation
\[\frac{\partial u}{\partial t}  =\frac{1}{2} a_{ij} \frac{\partial^2 u}{\partial x^i\partial x^j},\qquad u(x,0) = u_0(x),\]
via the connection  $f = - \ln u$. They should also be compared with the results of Proposition \ref{prop6}.
\end{remark}

\section{Deformed covariant derivatives}\label{vcaisuvfiuyt}
First we need to say something about the fiberwise tensor product in differential geometry. The tensor product of two vector bundles over a manifold $M$ has value, at a point, of the tensor product of the corresponding vector spaces at that point. But the vector space of sections of the tensor product bundle is \textit{not} the simple vector space tensor product of the two vector spaces of sections. For example, if we were to take the tensor product of differential forms, the vector space $\Omega^nM\tens \Omega^mM$ is far larger than the sections of the corresponding tensor product vector bundle. To repair this (ignoring completions), we use the tensor product over the algebra of functions
$\Omega^nM\tens_{C^\infty(M)} \Omega^mM$, which is defined so that the following are equal, where $\xi,\eta$ are forms and $f$ is a function:
\begin{eqnarray}
\xi.f\tens\eta\,=\,\xi\tens f.\eta\ .
\end{eqnarray}
If we return to our DGA $(F_{\mathbb{R}}^*,\extd_\alpha,\wedge_\alpha)$, the fiberwise tensor product of forms is denoted by $F_{\mathbb{R}}^n\tens_\alpha F_{\mathbb{R}}^m$, and we have the relation
\begin{eqnarray}
(\xi\wedge_\alpha f)\tens\eta\,=\,\xi\tens (f\wedge_\alpha \eta)\ .
\end{eqnarray}
We use $F_{\mathbb{R}}^n\tens_0 F_{\mathbb{R}}^m$ to denote the $\alpha=0$ case. 
Now the properties of $\mathcal{I}$ mean that we have a well defined map
$(\mathcal{I}\tens \mathcal{I})(\xi\tens\eta)=\mathcal{I}(\xi)\tens \mathcal{I}(\eta)$,
\begin{eqnarray} \label{cvagkxyxz}
\mathcal{I}\tens \mathcal{I}: 
F_{\mathbb{R}}^n \tens_0 F_{\mathbb{R}}^m    \to F_{\mathbb{R}}^n \tens_\alpha F_{\mathbb{R}}^m \ .
\end{eqnarray}

To avoid considering vector fields and stay with just forms, we use the following definition of covariant derivative on the $n$-forms: A left covariant derivative is a map from $\Omega^nM$ to the fiberwise tensor product of $\Omega^1M$ and $\Omega^nM$ obeying the left Leibniz rule for a function $f$
\begin{eqnarray}
\nabla(f.\xi)\,=\,\extd f\tens\xi+f.\nabla(\xi)\ .
\end{eqnarray}
(For classical manifolds insisting on `left' is superfluous, but we may more generally have a noncommutative product.) 
To get the more usual directional derivative along a vector field version, just pair the vector field with the $\Omega^1M$ factor.

More generally, to deform a left covariant derivative $\nabla:F^n\to F^1\tens_{F^0} F^n$ we first add time to get a covariant derivative on $F_{\mathbb{R}}^n$,
\begin{eqnarray}
\nabla_0(\xi) &=& \nabla(\xi)+\extd t\tens \frac{\partial \xi}{\partial t}\ ,
\end{eqnarray}
where $\nabla(\xi)$ is defined pointwise in time, i.e.\ $\nabla(\xi)(t)=\nabla(\xi(t))$. Now we deform the $\tens_0$ used here to $\tens_\alpha$ by using (\ref{cvagkxyxz}), and define
\begin{eqnarray} \label{mjcfyjmycytu}
\nabla_\alpha\ =\ (\mathcal{I}\tens \mathcal{I})\,\nabla_0\,\mathcal{I}^{-1}: F_{\mathbb{R}}^n    \to 
F_{\mathbb{R}}^1 \tens_\alpha F_{\mathbb{R}}^n  \ .
\end{eqnarray}
Now we check the left Leibniz rule, remembering for functions that $ \mathcal{I}(f)=f$:
\begin{eqnarray*}
\nabla_\alpha(f\wedge_\alpha \mathcal{I}(\xi)) &=& \nabla_\alpha(\mathcal{I}(f.\xi)) \cr
&=& (\mathcal{I}\tens \mathcal{I})\,\nabla_0(f.\xi) \cr
&=&  (\mathcal{I}\tens \mathcal{I})(\extd_0(f)\tens \xi+f.\nabla_0(\xi)) \cr
&=& \mathcal{I}(\extd_0(f))\tens \mathcal{I}(\xi) + f\wedge_\alpha \nabla_\alpha(\mathcal{I}(\xi)) \cr
&=& \extd_\alpha(f)\tens \mathcal{I}(\xi) + f\wedge_\alpha \nabla_\alpha(\mathcal{I}(\xi))\ .
\end{eqnarray*}

\begin{example} Using the diffusion example in Section~\ref{cbhajksvckvkvv1}, 
(\ref{mjcfyjmycytu}) gives
\begin{eqnarray}
\nabla_\alpha(\extd x^k) &=& -\Gamma^k_{pq}\,\extd x^p\,{\tens}_\alpha\extd x^q \cr
&& +\, \alpha\,\left(\begin{array}{c} g^{i\,j}\big(\Gamma^{k}_{m\,s} \,\Gamma^{s}_{j\,i} +\dfrac{\partial \Gamma^{k}_{j\,i} }{\partial x^{m}}- \Gamma^{s}_{j\,m}\,\Gamma^{k}_{s\,i}-\Gamma^{s}_{i\,m}\,\Gamma^{k}_{j\,s}\big)\,\extd x^{m}\tens_{\alpha} \extd t\\
+\,\extd t \tens_{\alpha} g^{i\,j}\big( -\,\dfrac{\partial \Gamma^{k}_{i\,q} }{\partial x^{j}}+\Gamma^{k}_{p\,q}\,\Gamma^{p}_{j\,i}\big)\, \extd x^{q}\end{array}\right) \cr
&&+\, O(\alpha^2)\,\extd t\,{\tens}_{\alpha}\extd t\ .
\end{eqnarray}
The part given serves to illustrate that the combinations of Christoffel symbols do not appear in any particularly nice order. This is not surprising -- the modified product, including in the definition of the tensor product $\tens_\alpha$, makes everything different from the usual case. 
\end{example}

\section{The noncommutative sphere}  \label{vcadisuvuid}
From \cite{woron87}, for a parameter $ q\in \mathbb{R}  $, define the quantum group $ \mathbb{C}_{q}[SL_{2}] $ to have generators $ a,b,c,d $ with relations
\begin{eqnarray*}
&&b a=q a b\,,\,       ca = q a c\,, \,       d b=q b d\,,\,     d c=q c d\,,\,   c b=b c\,,\,    d a - a d = q(1-q^{-2})b c\,,\,\\
&&a d-q^{-1}b c=1\ .
\end{eqnarray*}
There is a star operation
\begin{equation*}
 a^{*} = d\ , \, d^{*} = a\ , \, c^{*}=-qb\ ,\, b^{*}=-q^{-1}\,c\ ,
\end{equation*}
which gives a deformed analogue of functions on the group $SU_2$. 
 By using the relations, any element of $ \mathbb{C}_{q}[SL_{2}] $ is a linear combination of
$ a^{n}\,b^{m}\, c^{p} $ or  $ d^{n}\,b^{m}\,c^{p} $, and we have
\begin{eqnarray*}
(a^{n}\,b^{m}\,c^{\,p})^{*} &=&(-1)^{p+m}\,q^{-n(p+m)+p-m}\,d^{\,n}\,b^{p}\,c^{m}\ ,\\
(d^{n}\,b^{\,p}\,c^{m})^{*} &=&(-1)^{p+m}\,q^{n(p+m)-p+m}\,a^{\,n}\,b^{m}\,c^{p}\ .
\end{eqnarray*} 
The standard quantum sphere   \cite{pod87} is given by a grading on $ \mathbb{C}_{q}[SL_{2}] $. Take $a,c$ to have grade $+1$ and $b,d$ to have grade $-1$. The functions on the quantum sphere $ S^{2}_{q} $ is the subalgebra of $ \mathbb{C}_{q}[SL_{2}] $ consisting of elements of grade zero. 

There is a differential calculus on $ S^{2}_{q} $ given by adding two elements $e^\pm$, where $e^\pm$ has grade $\pm 2$. $\Omega^1 S^{2}_{q} $ consists of $f_+.e^++f_-.e^-$ of total grade zero, where $f_\pm\in \mathbb{C}_{q}[SL_{2}] $. The differential $\extd$ is given by
\begin{equation*}
\extd a = q\,b\,e^{+},\ \extd b = a\,e^{-},\ \extd c = q\,d\,e^{+}, \  \extd d = c\,e^{-}    
\end{equation*}
The commutation relations of the 1-forms $e^\pm$ with the algebra are
\begin{equation*}
e^{\pm}\,a = q\,a\, e^{\pm},\ e^{\pm}\, b = q^{-1}\,b\,e^{\pm},\ e^{\pm}\,c = q\,c\,e^{\pm},\ e^{\pm}\,d = q^{-1}\,d\,e^{\pm}   
\end{equation*}
The wedge product of forms has the relations
\begin{equation*}
 q^{2}\,e^{+}\wedge e^{-}+e^{-}\wedge e^{+}=0,\quad e^{\pm}\wedge e^{\pm}=0.
\end{equation*}

It will be convenient to use the well known $ q $-integers, defined by
\begin{eqnarray}
[n]_{q}&=&\frac{1-q^{n}}{1-q}= 1+q+q^{2}+\dots+q^{n-1}\ .
\end{eqnarray} 
In what follows, we use $ [n]$ to mean $[n]_{q^{2}} $ for short, for example $ [0]= 0  $, $ [1]= 1  $ and $ [2]= 1+q^2 $. 
Also define the $q^2$-factorials $[n]!=[n][n-1][n-2]\dots[1]$ and the $q^2$-binomial coefficients by
\begin{eqnarray}
{p \brack r} \,=\, \frac{[p]!}{[r]!\ [p-r]!}\ .
\end{eqnarray}

\begin{prop}
$\extd$ applied to $S^2_q$ gives, for $n\ge 1$,
\begin{eqnarray*}\label{eq18}
\extd(a^{n}\,b^{m}\,c^{\,p}) &=&[p+n]\,q^{3-2p-n}\,e^{+}\,a^{\,n-1}\,b^{m+1}\,c^{\,p}
+[m]\,q^{-n-1}\,e^{-}\,a^{\,n+1}\,b^{m-1}\,c^{\,p}\nonumber\\
&&+\ [p]\,q^{4-2p-n}\,e^{+}\,a^{\,n-1}\,b^{m}\,c^{\,p-1}\ ,\cr
 \extd(d^{\,n}\,b^{m}\,c^{\,p}) &=&[m+n]\,q^{-n}\,e^{-}\,d^{\,n-1}\,b^{m}\,c^{\,p+1}
+ [p]\,q^{4-2p+n}\,e^{+}\,d^{\,n+1}\,b^{m}\,c^{\,p-1}\nonumber\\
 &&+\,[m]\,q^{n-1}\,e^{-}\,d^{\,n-1}\,b^{m-1}\,c^{\,p}\ ,\cr
 \extd(b^{m}\,c^{\,p}) &=&[m]\,q^{-1}\, e^{-}\,a\,b^{m-1}\,c^{\,p}+[p]\,q^{4-2p}\,e^{+}\,d\,b^{m}\,c^{\,p-1}.
\end{eqnarray*}
\end{prop}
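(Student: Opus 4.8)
The plan is to prove all three formulas from the graded derivation (Leibniz) property of $\extd$, together with the bimodule commutation relations for $e^\pm$ and the defining relations of $\C_q[SL_2]$ to put each resulting term into the normal form ``$e^\pm$ times a standard monomial''. Since the monomials are $0$-forms (functions) and $\extd$ raises form-degree by one, the only structure in play is the order-zero Leibniz rule $\extd(fg)=(\extd f)\,g+f\,(\extd g)$, the generator differentials $\extd a = qbe^+$, $\extd b = ae^-$, $\extd c = qde^+$, $\extd d = ce^-$, the module relations $e^\pm a = qae^\pm$, $e^\pm b = q^{-1}be^\pm$, $e^\pm c = qce^\pm$, $e^\pm d = q^{-1}de^\pm$, and the algebra relations.

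First I would treat the third formula, $\extd(b^m c^p)$, the case where the monomial involves neither $a$ nor $d$. By Leibniz, $\extd(b^m c^p)=\extd(b^m)\,c^p + b^m\,\extd(c^p)$, and each pure power is computed by a telescoping sum, e.g. $\extd(b^m)=\sum_{j}b^j(\extd b)b^{m-1-j}=\sum_j b^j(ae^-)b^{m-1-j}$. After commuting $e^-$ to the left and normal-ordering the single $a$ past the $b$'s, the $q$-powers assemble into a geometric series in $q^2$ that collapses to the $q$-integer $[m]$. The differential $\extd c = qde^+$ introduces a single $d$, but with no $a$ present it simply sits in standard position $d\,b^m c^{p-1}$, yielding the stated $e^+$ term with no further collision. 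This establishes the base case and exhibits the source of the $q$-integers.

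Then I would obtain the first two formulas by induction on the power of $a$ (respectively $d$), peeling off the leftmost letter: $\extd(a^n b^m c^p)=(\extd a)\,a^{n-1}b^m c^p + a\,\extd(a^{n-1}b^m c^p)$, substituting the inductive formula for the second summand. Multiplying the inductive expression on the left by $a$ and commuting $a$ through $e^\pm$ via $a\,e^\pm = q^{-1}e^\pm a$ reproduces the middle and last terms at once, while $(\extd a)\,a^{n-1}b^m c^p = qbe^+ a^{n-1}b^m c^p$ feeds the first ($e^+ a^{n-1}b^{m+1}c^p$) term. The crucial point is that the $d$ created by differentiating $c$ in the base case meets the first $a$ precisely at the step $n=0\to n=1$, and this is resolved by the quadratic relation $ad = 1 + q^{-1}bc$ (for the $d$-family, by $da = 1 + qbc$, which follows from $da-ad=q(1-q^{-2})bc$); this eliminates all mixed $a$--$d$ terms and splits the contribution between the $b^{m+1}c^p$ and $b^m c^{p-1}$ monomials. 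Collecting the two sources of the first term then requires the $q^2$-integer addition identities $[p]+q^{2p}=[p+1]$ and, in general, $[p+n]=q^{2p}[n]+[p]$ (and the analogue producing $[m+n]$), which I would verify directly from $[k]=(1-q^{2k})/(1-q^2)$.

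The main obstacle is the bookkeeping of the powers of $q$ that accumulate as $e^\pm$ is commuted to the front and as the generators are normal-ordered, and in particular the single mixed $a$--$d$ term: one must track the quadratic relation so that the cancellation between the constant term $1$ and the $q^{-1}bc$ term lands exactly on the two $e^+$ monomials with the stated coefficients, and so that the two contributions to the leading term reassemble into the single $q$-integer $[p+n]$ (respectively $[m+n]$). Everything else is a routine application of the derivation property and the commutation relations; the only conceptual input is recognizing that differentiating $c$ in the presence of $a^n$ (respectively $b$ in the presence of $d^n$) forces the quadratic relation.
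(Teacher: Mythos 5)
Your proposal is correct: the paper states this proposition without any proof, and your strategy — the Leibniz rule on monomials, commutation of $e^{\pm}$ plus normal-ordering to generate the $q^{2}$-integers, induction on the power of $a$ (resp.\ $d$) with the quadratic relation $ad=1+q^{-1}bc$ (resp.\ $da=1+qbc$) invoked exactly once at the step $n=0\to n=1$, and the identity $[k]+q^{2k}=[k+1]$ to merge the two contributions to the leading coefficient — is precisely the direct computation that establishes it. I verified the base case, the $n=1$ collision (where $e^{+}ad\,b^{m}c^{p-1}$ splits into the $b^{m}c^{p-1}$ and $b^{m+1}c^{p}$ terms) and the inductive step for both the $a$- and $d$-families; all coefficients come out exactly as stated.
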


\section{The eigenfunctions of the Laplace operator on $S^2_q$}
We repeat the calculations of Section~\ref{cbhajksvckvkvv1}  for the differential calculus on $S^2_q$ specified in Section~\ref{vcadisuvuid}. The covariant derivative of a $ 1$-form $ \xi =e^{+}\,f_{+}\,+e^{-}\,f_{-}$ is 
 \begin{eqnarray*}
  \nabla\,\xi& =& q^{\,-2}\,\extd f_{+}\tens e^{\,+}+q^{\,2}\,\extd f_{-}\tens e^{\,-}
  + q^{\,-2}\,f_{+}\, \nabla\,e^{\,+}+ q^{\,2}\,f_{-}\, \nabla\,e^{\,-}\ .
 \end{eqnarray*}
From \cite{BeMaStarRiem} we take the Levi Civita connection on $S^2_q$ given by the left covariant derivative specified by $\nabla e^\pm=0$. 
For the vector fields, it will be convenient to take $v^\pm$ (of grades $\mp2$) to be the dual basis to $e^\pm$ (i.e.\ evaluating $v^+$ on $e^+$ gives 1, and on $e^-$ gives 0). 
In term of vector fields we take the metric $g^{\mu\nu}$ to be, where $ \alpha $ and $\beta$ are real,
  \begin{eqnarray} \label{vchdksukjf}
 \alpha\,(v^{+} \tens v^{-}) +\beta\,(v^{-} \tens v^{+})\ .
  \end{eqnarray}
  The interior product of a vector field and a 1-form is simply taken to be evaluation, as given above.
Now we can use (\ref{cvdgsyjhtc}) to get the following result:

 \begin{prop} \label{vcdisuafvutyftuycudtuy}
 $ \delta $ applied to $ 1$-forms gives, for $n\ge 1$,
\begin{eqnarray*}
\delta (e^{+}\, a^{n}\, b^{m}\, c^{p}\,)&=&\alpha\,[m]\,q^{-n-3}\,a^{n+1}\,b^{m-1}\,c^{p}\\
\delta(e^{-}\, a^{n}\, b^{m}\,c^{p})&=&\beta\,[p]\,q^{6-2p-n}\, a^{n-1}b^{m}\,c^{p-1}+\beta\,[p+n]\,q^{5-2p-n}\,a^{n-1}\,b^{m+1}\,c^{p}\\
\delta(e^{+}\, d^{n}\, b^{m}\,c^{p})&=&\alpha\,[n+m]\,q^{-n-2}\,d^{\,n-1}\,b^{m}\,c^{\,p+1} + \alpha\,[m]\,q^{n-3}\,d^{\,n-1}\,b^{m-1}\,c^{\,p}\\
\delta (e^{-}\, d^{n}\, b^{m}\, c^{p}\,)&=&\beta\,[p]\,q^{6-2p+n}\,d^{\,n+1}\,b^{m}\,c^{\,p-1}\\
\delta(e^{+}\, b^{m}\,c^{p})&=&\alpha\,[m]\,q^{-3}\,a\,b^{m-1}\,c^{\,p}\\
\delta (e^{-}\, b^{m}\, c^{p}\,)&=&\beta\,[p]\,q^{6-2p}\,d\,b^{m}\,c^{\,p-1}\ .
 \end{eqnarray*}
 \end{prop}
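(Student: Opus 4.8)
The plan is to reduce $\delta$ on an arbitrary $1$-form to a purely algebraic operation --- the extraction of a single coefficient of $\extd f_\pm$ --- and then to substitute the explicit differential on $S^2_q$ computed in the preceding Proposition. The explicit $\extd$ formulae are assumed and not re-derived; the content here is the bookkeeping that turns (\ref{cvdgsyjhtc}) and (\ref{vchdksukjf}) into the six displayed lines.

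First I would assemble the general formula for $\delta$ on $\xi = e^{+}f_{+}+e^{-}f_{-}$. Since the Levi--Civita connection taken from \cite{BeMaStarRiem} has $\nabla e^{\pm}=0$, the covariant derivative displayed just before the statement collapses to $\nabla\xi = q^{-2}\,\extd f_{+}\tens e^{+}+q^{2}\,\extd f_{-}\tens e^{-}$. Feeding the metric (\ref{vchdksukjf}) into (\ref{cvdgsyjhtc}), with the interior product read as evaluation against the dual basis $v^{\pm}$, yields $\delta\xi = \alpha\,v^{+}\righthalfcup\nabla_{v^{-}}\xi + \beta\,v^{-}\righthalfcup\nabla_{v^{+}}\xi$, where $\nabla_{v^{\mp}}$ contracts $v^{\mp}$ against the first leg of $\nabla\xi$ and $v^{\pm}\righthalfcup$ then contracts the surviving $e$-factor. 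Using $v^{\pm}\righthalfcup e^{\pm}=1$ and $v^{\pm}\righthalfcup e^{\mp}=0$, only the diagonal combinations survive: the $\alpha$ term equals $q^{-2}$ times the $e^{-}$-coefficient of $\extd f_{+}$, and the $\beta$ term equals $q^{2}$ times the $e^{+}$-coefficient of $\extd f_{-}$.

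With this reduction in hand the six cases are uniform. For $\delta(e^{+}\cdot a^{n}b^{m}c^{p})$ one sets $f_{+}=a^{n}b^{m}c^{p}$, $f_{-}=0$, so only the $\alpha$ term contributes; reading off the $e^{-}$ component of $\extd(a^{n}b^{m}c^{p})$, namely $[m]\,q^{-n-1}a^{n+1}b^{m-1}c^{p}$, and multiplying by $\alpha\,q^{-2}$ reproduces the first line. The $e^{+}$-form cases are governed instead by the $\beta$ term: one takes the $e^{+}$ component of the appropriate $\extd$ and multiplies by $\beta\,q^{2}$, and each of the $d^{n}$- and $b^{m}c^{p}$-monomials is handled by the same recipe. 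Collecting the $q$-powers matches each displayed line; I would check the combination explicitly on one $a^{n}$-case and one $d^{n}$-case, after which the remainder is mechanical substitution.

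The main obstacle is not the substitution but setting up the reduction correctly in the noncommutative setting: one must fix the conventions for $\righthalfcup$ as a one-sided module map and for the contraction $\nabla_{v^{\mp}}$ on the first tensor leg, so that the scalar coefficients produced by $v^{\mp}\righthalfcup\extd f_{\pm}$ can be moved past $e^{\pm}$ consistently. Once that ordering is pinned down, the $q^{\pm2}$ factors from $\nabla$ and the $q$-powers inherited from the $\extd$ formula combine unambiguously, and the $q$-integer coefficients $[m],[p],[p+n],[m+n]$ are simply inherited from the differential. It is precisely the uniqueness of this module structure --- guaranteed here because $\delta$ acts only on $1$-forms, where the interior product is unambiguous evaluation --- that makes the computation well defined, in contrast to the higher-degree difficulties flagged in the introduction.
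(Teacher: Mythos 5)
Your proposal is correct and is essentially the paper's own (implicit) derivation: the paper obtains the Proposition in exactly this way, feeding the Levi--Civita connection $\nabla e^{\pm}=0$, the metric (\ref{vchdksukjf}) and evaluation against the dual basis $v^{\pm}$ into (\ref{cvdgsyjhtc}), and then substituting the $\extd$ formulae of the preceding Proposition. Your reduction --- the $\alpha$ term being $\alpha\,q^{-2}$ times the $e^{-}$-coefficient of $\extd f_{+}$ and the $\beta$ term being $\beta\,q^{2}$ times the $e^{+}$-coefficient of $\extd f_{-}$ --- reproduces all six displayed lines (and the ordering worry is harmless here, since those coefficients have grade zero and so commute past $e^{\pm}$).
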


\begin{prop}\label{prop2}
For functions in $S_{q}^{2} $,
\begin{eqnarray*}
\Delta(a^{n}\,b^{m}\,c^{p})&=&[m]\,q^{3-2m}(\beta+\alpha\,q^{-2})\big([m+1]\,a^{n}\,b^{m}\,c^{p}+\,q\,[p]\,a^{n}\,b^{m-1}\,c^{p-1}\big)\\
\Delta(d^{n}\,b^{m}\,c^{p})&=&[p]\,q^{3-2p}(\beta+\alpha\,q^{-2})\big([p+1]\,d^{\,n}\,b^{m}\,c^{\,p}+[m]\,q^{2n+1}\,d^{\,n}\,b^{m-1}\,c^{p-1}\big)\\
\Delta(b^{m}\,c^{p})&=&[p]\,q^{3-2p}(\beta+\alpha\,q^{-2})\big([p+1]\,b^{p}\,c^{\,p}+q\,[p]\,b^{p-1}\,c^{\,p-1}\big).
\end{eqnarray*}
\end{prop}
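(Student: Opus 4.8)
The plan is to compute $\Delta$ straight from its definition $\Delta=\delta\,\extd+\extd\,\delta$, using the fact that the arguments here are functions, i.e.\ $0$-forms. Since $\delta$ lowers degree and $\Omega^{-1}S^2_q=0$, we have $\delta(f)=0$ for every $f\in S^2_q$, so the second term drops out and $\Delta(f)=\delta(\extd f)$. Thus the proposition is nothing more than the composite of two formulas we already have: first the action of $\extd$ on $S^2_q$ from (\ref{eq18}), and then the action of $\delta$ on $1$-forms from Proposition~\ref{vcdisuafvutyftuycudtuy}.

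First I would apply (\ref{eq18}) to $a^n b^m c^p$. This produces three $1$-forms, each of the form $e^{+}$ or $e^{-}$ times a monomial in $a,b,c$ with shifted exponents. I would then substitute each of these into Proposition~\ref{vcdisuafvutyftuycudtuy}, taking care to match each term to the correct one of the six listed cases (the $e^{+}a^{\cdots}$ case, the $e^{-}a^{\cdots}$ case, and so on) and to shift the exponents accordingly. The outcome is $\delta(\extd(a^n b^m c^p))$ written as an explicit sum of monomials of types $a^n b^m c^p$ and $a^n b^{m-1}c^{p-1}$, each carrying a coefficient that is a product of two $q^2$-integers, a power of $q$, and a single factor $\alpha$ or $\beta$. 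The $d^n b^m c^p$ case is handled identically using the remaining lines of (\ref{eq18}) together with the matching cases of Proposition~\ref{vcdisuafvutyftuycudtuy}, and the $b^m c^p$ case is simply the $n=0$ specialisation.

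The step that does the real work, and the one I expect to be the main obstacle, is the collection and simplification of these coefficients. The essential point is that a monomial $a^n b^m c^p$ lies in $S^2_q$ only when it has total grade zero, which forces $m=n+p$ (and $p=n+m$ in the $d^n b^m c^p$ case). This constraint is exactly what collapses the $q^2$-integers coming out of the differential, turning $[p+n]$ into $[m]$ and $[p+n+1]$ into $[m+1]$, so that the two separate contributions to each surviving monomial acquire a common $q^2$-integer factor. Once this is used, the powers of $q$ attached to the $\alpha$- and $\beta$-parts of any given monomial differ by precisely $q^2$, which lets me pull out the common factor $(\beta+\alpha\,q^{-2})$ and recover the stated expressions. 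The delicate part throughout is the bookkeeping of the $q$-exponents, so I would organise the calculation monomial by monomial and verify each coefficient separately, using the grade-zero identities for the $q^2$-integers (such as $[p+n]=[m]$) as the tool that makes the $\alpha$- and $\beta$-terms line up.
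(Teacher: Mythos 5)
Your proposal is correct and is exactly the calculation the paper intends (Proposition~\ref{prop2} is stated without a printed proof, as the composite of the preceding results): on functions $\Delta=\delta\,\extd$, and composing the formula for $\extd$ on monomials with Proposition~\ref{vcdisuafvutyftuycudtuy}, then using the grade-zero constraint $m=n+p$ (resp.\ $p=n+m$) to identify $[p+n]=[m]$, $[p+n+1]=[m+1]$ and line up the $q$-powers of the $\alpha$- and $\beta$-terms, does yield the common factor $(\beta+\alpha\,q^{-2})$ and the stated coefficients. I checked the bookkeeping term by term (including the $b^m c^p$ case, which uses the separate third line of the $\extd$ formula together with the $\delta(e^-a\,b^{m-1}c^p)$ and $\delta(e^+d\,b^m c^{p-1})$ cases) and it reproduces the proposition exactly.
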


\begin{theorem}\label{Theor2}
For $S^{2}_{q}$ the eigenfunctions of $ \Delta $ are, where $x=b\,c$ and $p\ge 0$,  
\begin{eqnarray*}
&& a^{n}\,b^{n}\sum\limits_{r=0}^{p}
q^{(p-r)^{2}}   {p \brack r}   \, {2n+p+r \brack n+r}  
\,x^{r}\ ,\cr
&&d^{n}\,c^{n}\sum\limits_{r=0}^{p}
q^{(p-r)(2n+p-r)} {p \brack r}   \,  {2n+p+r  \brack n+r}           \,x^{r}     \ ,
\end{eqnarray*}
with eigenvalue $ (\beta+\alpha\,q^{-2}) [n+p+1]\,[n+p]\,q^{3-2n-2p}$ for $n\ge 1$, and
 \begin{equation*}\label{eq26}
\sum\limits_{r=0}^{p}
q^{(p-r)^{2}}  {p \brack r}    \,  {p+r \brack r}        \,x^r\ ,
\end{equation*}
with eigenvalue $ (\beta+\alpha\,q^{-2}) [p+1]\,[p]\,q^{3-2p}$.
\end{theorem}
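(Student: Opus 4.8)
The plan is to read off from Proposition~\ref{prop2} that, in a suitable monomial basis, $\Delta$ is a \emph{lower bidiagonal} operator, and then to reduce the eigenvector equation to a single $q$-integer identity.

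\textbf{Setting up the bidiagonal form.} Write $K=\beta+\alpha\,q^{-2}$ and $x=bc$. The grade-zero monomials come in three families: $\phi_{n,r}:=a^n b^n x^r=a^n b^{n+r}c^r$, the $d$-type $\psi_{n,r}:=d^n c^n x^r=d^n b^r c^{n+r}$, and (the $n=0$ case) the powers $x^r$. Substituting $m=n+r$, $p=r$ into the first line of Proposition~\ref{prop2}, and noting $a^n b^{n+r-1}c^{r-1}=\phi_{n,r-1}$, I would obtain
\begin{equation*}
\Delta(\phi_{n,r})=d_r\,\phi_{n,r}+s_r\,\phi_{n,r-1},\qquad
d_r=K\,[n+r]\,[n+r+1]\,q^{3-2(n+r)},
\end{equation*}
with $s_r=K\,[n+r]\,[r]\,q^{4-2(n+r)}$, so the off-diagonal term simply lowers $r$ by one. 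The second line of Proposition~\ref{prop2} gives the same shape for $\psi_{n,r}$ with the \emph{identical} diagonal $d_r$ but subdiagonal $\tilde s_r=K\,[n+r]\,[r]\,q^{4-2r}$ (the extra $q^{2n+1}$ factor), and the third line is the $n=0$ specialisation.

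\textbf{Reducing to a recurrence.} The claimed eigenvector is a finite sum $\Psi=\sum_{r=0}^{p}c_r\,\phi_{n,r}$. Applying the bidiagonal action and re-indexing the off-diagonal contribution (the $r=0$ term drops out because $[0]=0$) yields $\Delta\Psi=\sum_r\big(c_r d_r+c_{r+1}s_{r+1}\big)\phi_{n,r}$ with the convention $c_{p+1}=0$. Hence $\Delta\Psi=\lambda\Psi$ is equivalent to $c_r d_r+c_{r+1}s_{r+1}=\lambda c_r$. The top equation at $r=p$ forces $\lambda=d_p=K\,[n+p]\,[n+p+1]\,q^{3-2n-2p}$, exactly the stated eigenvalue; it then only remains to verify the two-term recurrence $c_{r+1}s_{r+1}=(\lambda-d_r)c_r$ for $0\le r<p$.

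\textbf{The $q$-integer identity.} Using ${p\brack r+1}={p\brack r}\,[p-r]/[r+1]$ and ${2n+p+r+1\brack n+r+1}={2n+p+r\brack n+r}\,[2n+p+r+1]/[n+r+1]$, the ratio $c_{r+1}/c_r$ of the Gaussian-weighted $q$-binomials collapses to a power of $q$ times $[p-r]\,[2n+p+r+1]/([r+1]\,[n+r+1])$. Inserting the formulas for $d_r$ and $s_{r+1}$ and cancelling the common $q$-powers, the recurrence reduces to the single $q^2$-integer identity
\begin{equation*}
[p-r]\,[2n+p+r+1]=[n+p+1]\,[n+p]-q^{2p-2r}\,[n+r+1]\,[n+r],
\end{equation*}
which, on setting $Q=q^2$ and multiplying by $(1-Q)^2$, becomes $1-Q^{p-r}-Q^{2n+p+r+1}+Q^{2n+2p+1}$ on both sides. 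For the $d$-type family the different Gaussian exponent $(p-r)(2n+p-r)$ precisely compensates the different subdiagonal $\tilde s_r$, so its recurrence reduces to the \emph{same} identity.

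\textbf{Main obstacle.} The conceptual content is light, since triangularity forces the eigenvalue and leaves only a scalar recurrence. The genuine difficulty — and the only place an error is likely — is the bookkeeping of the many $q$-powers (the exponents $3-2(n+r)$, the extra $q^{2n+1}$, and the two Gaussian normalisations $(p-r)^2$ versus $(p-r)(2n+p-r)$), ensuring that after all cancellations both families land on the one $q^2$-integer identity above.
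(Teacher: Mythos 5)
Your proposal is correct: the bidiagonal coefficients you read off from Proposition~\ref{prop2} (diagonal $d_r=K[n+r][n+r+1]q^{3-2(n+r)}$, subdiagonals $s_r$ and $\tilde s_r$), the eigenvalue forced at $r=p$, and the reduction of both families to the single identity $[p-r][2n+p+r+1]=[n+p+1][n+p]-q^{2p-2r}[n+r+1][n+r]$ all check out, and that identity does hold (both sides equal $1-Q^{p-r}-Q^{2n+p+r+1}+Q^{2n+2p+1}$ over $(1-Q)^2$ with $Q=q^2$). The paper states Theorem~\ref{Theor2} without any written proof, the intended argument being exactly this direct verification from Proposition~\ref{prop2}, so your route coincides with the paper's implicit one while making the triangular structure and the scalar recurrence explicit.
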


\section{The interior product and higher forms}
The $ 2$-forms on $ S^{2}_{q} $ are $ (f\,e^{+}\wedge e^{-}) $ where $ f $ is an element of $ S^{2}_{q} $. To calculate $ \delta $ of $ 2$-forms  we need to evaluate vector field $ v^{+} $ and $ v^{-} $ on  $ 2$-forms. Classically this is the interior product of a vector field and an $ n $-form to give an $ n-1 $ form. in the absence of definite idea of how to do this in the noncommutative case, we define for $ \gamma,\epsilon\, \in \C $ 
\begin{eqnarray}  \label{bjfildavblk}
v^{+}\,\righthalfcup\,(e^{+}\wedge e^{-})\,=\,\gamma\,e^{-}\ ,\quad
v^{-}\,\righthalfcup\,(e^{+}\wedge e^{-})\,=\,-\,\epsilon\,e^{+}
\end{eqnarray}

\begin{prop}   \label{bncjdklsbkjh}
$\delta$ applied to a $ 2$-form on $ S^{2}_{q} $ gives 
 \begin{eqnarray*}
 \delta(a^{n}\,b^{m}\,c^{\,p}\,e^{+}\wedge e^{-})&=&\alpha\,e^{-}\,\gamma\,[m]\,q^{-n-5}\,a^{n+1}\,b^{m-1}\,c^{\,p}\\
 &&-\,\beta\,q^{7-2p-n}\,\epsilon\,e^{+}([p+n]\,a^{n-1}\,b^{m+1}\,c^{\,p}+[p]\,q\,a^{n-1}\,b^{m}\,c^{\,p-1})\\
 \delta(d^{n}\,b^{m}\,c^{\,p}\,e^{+}\wedge e^{-})&=&\alpha\,\gamma\, e^{-}\left(\begin{array}{c}[m+n]\,q^{-4-n}\,d^{\,n-1}\,b^{m}\,c^{\,p+1}\\+\,[m]\,q^{n-5}\,d^{\,n-1}\,b^{m-1}\,c^{\,p}\end{array}\right)\\
 &&-\,\beta\,q^{8-2p+n}\,\epsilon\, e^{+}\,[p]\,d^{\,n+1}\,b^{m}\,c^{\,p-1}\\
 \delta(b^{m}\,c^{\,p}\,e^{+}\wedge e^{-}) &=&\alpha\,q^{-5}\,\gamma\, e^{-}\,[m]\,a\,b^{m-1}\,c^{p}-\beta\,q^{8-2p}\,\epsilon\, e^{+}\,[p]\,d\,b^{m}\,c^{p-1}.\\
 \end{eqnarray*} 
\end{prop}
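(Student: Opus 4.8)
The plan is to specialise the definition (\ref{cvdgsyjhtc}) of $\delta$ to the calculus on $S^2_q$, exactly as was done for functions in Proposition~\ref{prop2} and for $1$-forms in Proposition~\ref{vcdisuafvutyftuycudtuy}; the only genuinely new ingredient is the interior products (\ref{bjfildavblk}) on $2$-forms. With the metric (\ref{vchdksukjf}) the homotopy takes the form
\begin{eqnarray*}
\delta(\xi)\,=\,\alpha\,\big(v^{+}\righthalfcup\nabla_{v^{-}}\,\xi\big)+\beta\,\big(v^{-}\righthalfcup\nabla_{v^{+}}\,\xi\big)\ ,
\end{eqnarray*}
where $\nabla_{v^{\pm}}\,\xi$ denotes the contraction of $v^{\pm}$ into the $\Omega^1$-slot of $\nabla\xi$. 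So the first task is to compute $\nabla$ on a $2$-form $f\,e^{+}\wedge e^{-}$.

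Since the Levi-Civita connection has $\nabla e^{\pm}=0$, extending $\nabla$ to $\Omega^2 S^2_q$ by the graded Leibniz rule gives $\nabla(e^{+}\wedge e^{-})=0$, so only the differential of the coefficient contributes and $\nabla(f\,e^{+}\wedge e^{-})$ is $\extd f$ paired with $e^{+}\wedge e^{-}$, carrying bimodule grading weights inherited from the $1$-form formula $\nabla\xi=q^{-2}\,\extd f_{+}\tens e^{+}+q^{2}\,\extd f_{-}\tens e^{-}$. I would then insert the explicit expressions for $\extd(a^{n}b^{m}c^{p})$, $\extd(d^{n}b^{m}c^{p})$ and $\extd(b^{m}c^{p})$ and split $\extd f$ into its $e^{+}$- and $e^{-}$-components. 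Under the $\alpha$-contraction only the $e^{-}$-component survives (it is what $v^{-}$ detects) and is sent to $\gamma\,e^{-}$ by the first relation in (\ref{bjfildavblk}); under the $\beta$-contraction only the $e^{+}$-component survives and is sent to $-\epsilon\,e^{+}$ by the second. In each case the surviving component is precisely the one that produced the corresponding line of Proposition~\ref{vcdisuafvutyftuycudtuy}, so the monomial structure of the answer is forced and only the scalar prefactors remain to be pinned down.

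The remaining step is purely computational: reorder each monomial so that $e^{\pm}$ stands on the left, using $e^{\pm}a=q\,a\,e^{\pm}$, $e^{\pm}b=q^{-1}b\,e^{\pm}$, $e^{\pm}c=q\,c\,e^{\pm}$, and collect the powers of $q$. Doing this for the three choices $f=a^{n}b^{m}c^{p}$, $f=d^{n}b^{m}c^{p}$ and $f=b^{m}c^{p}$ produces the three displayed formulae. I expect the main obstacle to be bookkeeping rather than anything conceptual: one must keep the grading weights in $\nabla$ on $2$-forms consistent with the $1$-form case and then track every power of $q$ through the reordering, which is what generates the precise exponents such as $q^{-n-5}$ and $q^{7-2p-n}$. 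A convenient internal check is that, on the grade-zero elements that actually make up $S^2_q$, the $\alpha$- and $\beta$-channels pick up compensating factors $q^{-2}$ and $q^{+2}$ respectively, of the same type as the $q^{\mp2}$ weights already seen in the $1$-form covariant derivative.
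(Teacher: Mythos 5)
Your overall strategy is the right one, and it is surely the computation behind the paper's statement: with the metric (\ref{vchdksukjf}) one has $\delta(\xi)=\alpha\,v^{+}\righthalfcup\nabla_{v^{-}}\xi+\beta\,v^{-}\righthalfcup\nabla_{v^{+}}\xi$, the connection with $\nabla e^{\pm}=0$ gives $\nabla(f\,e^{+}\wedge e^{-})=\extd f\tens e^{+}\wedge e^{-}$, and each channel picks out one $e^{\pm}$-component of $\extd f$ and applies (\ref{bjfildavblk}). The gap is in the step you dismiss as bookkeeping, and it is exactly where the $2$-form case differs from the $1$-form case. In the $\alpha$-channel the surviving piece of $\extd f$ is $e^{-}g$ with coefficient $g$ of grade $+2$ (e.g.\ $g=[m]\,q^{-n-1}a^{n+1}b^{m-1}c^{p}$ when $f=a^{n}b^{m}c^{p}$); in the $\beta$-channel it is $e^{+}g$ with $g$ of grade $-2$. (In Proposition~\ref{vcdisuafvutyftuycudtuy} the analogous coefficients all have grade $0$, which is why the $1$-form case never forces the issue.) After contracting $v^{\mp}$ through the tensor product one is left with $g\,(e^{+}\wedge e^{-})$, and the convention that reproduces the stated formulae is that $g$ must be commuted past the \emph{whole} $2$-form before (\ref{bjfildavblk}) is applied, the interior product then acting as a right-module map: since moving an element of grade $k$ from the left to the right of $e^{+}\wedge e^{-}$ costs $q^{-2k}$, this produces $q^{-4}$ in the $\alpha$-channel and $q^{+4}$ in the $\beta$-channel. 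Your plan instead applies (\ref{bjfildavblk}) first and then reorders the single surviving generator $e^{\mp}$ to the left of $g$, which produces only $q^{\mp 2}$; your closing ``internal check,'' which predicts compensating factors $q^{-2}$ and $q^{+2}$ ``of the same type as in the $1$-form covariant derivative,'' confirms that this is what you intend, and it is incorrect.

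Concretely, your normalisation yields $\delta(a^{n}b^{m}c^{p}\,e^{+}\wedge e^{-})=\alpha\,\gamma\,[m]\,q^{-n-3}\,e^{-}a^{n+1}b^{m-1}c^{p}-\beta\,\epsilon\,q^{5-2p-n}\,e^{+}\big([p+n]\,a^{n-1}b^{m+1}c^{p}+[p]\,q\,a^{n-1}b^{m}c^{p-1}\big)$, which differs from the Proposition by $q^{\mp2}$ channel by channel, and the same discrepancy propagates through all three displayed formulae. That $q^{\mp4}$ (not $q^{\mp2}$) is what the statement requires can be read off directly: the $e^{-}$-coefficient of $\extd(a^{n}b^{m}c^{p})$ carries $[m]\,q^{-n-1}$ and $q^{-n-1}\cdot q^{-4}=q^{-n-5}$ as stated, while the $e^{+}$-coefficients carry $[p+n]\,q^{3-2p-n}$ and $[p]\,q^{4-2p-n}$, and $q^{3-2p-n}\cdot q^{4}=q^{7-2p-n}$, $q^{4-2p-n}\cdot q^{4}=q\cdot q^{7-2p-n}$, again as stated. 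A smaller confusion in your write-up: for $f$ of grade $0$ the coefficient of the $2$-form already sits on the left, so no weights ``inherited from the $1$-form formula'' occur in $\nabla$ itself ($\nabla(f\,e^{+}\wedge e^{-})=\extd f\tens e^{+}\wedge e^{-}$ with no $q$-factors); all the extra powers of $q$ arise in the interior-product step just described. With that one correction, your computation does reproduce the Proposition.
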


It is now possible to calculate the Laplace operator applied to 1-forms (in Proposition~\ref{cbvdsjk}) and to 2-forms (in Proposition~\ref{cbvdsjk11}). Note that the formula for
$\Delta$ on 1-forms is quite complicated. There are choices for 
$\gamma$ and $\epsilon$ in the interior product (\ref{bjfildavblk}) which will considerably simplify these formulae, but rather than merely justifying these values by simplifying the results of Proposition~\ref{cbvdsjk}, we shall see that they are predicted by Hodge theory.

\begin{prop}\label{cbvdsjk} On 1-forms $\Delta$ has the form
\begin{eqnarray*}
\Delta(e^{+}\,a^{n}\,b^{m}\,c^{p})&=&(\beta\,\epsilon+\alpha\,q^{-6})[m]\,q^{5-2p-2n}\,e^{+}([n+p+1]a^{n}\,b^{m}\,c^{\,p}+q\,[p]\,a^{n}\,b^{m-1}\,c^{\,p-1})\\
  &&+\alpha\,[m]\,[m-1]\,q^{-2n-5}\,e^{-}(1-\gamma\,q^{-2})a^{n+2}\,b^{m-2}\,c^{\,p}\\
\Delta(e^{-}\,a^{n}\,b^{m}\,c^{p})&=&\beta(1-\epsilon\,q^{4})a^{n-2}\,e^{+}
\left(\begin{array}{c}  [p+n]\,[p+n-1]\,q^{9-4p-2n}\,b^{m+2}\,c^{p}\\+\, [p]\,[p-1]\,q^{13-4p-2n}\,b^{m}\,c^{p-2}\\+\,  [p]\,[p+n-1](1+q^{2})q^{10-4p-2n}\,b^{m+1}\,c^{p-1}\end{array}\right)\\
&&+\,q^{5-2p-2n}(\beta+\alpha\,\gamma\,q^{-4})e^{-}\big([m+2]\,[m+1]\,a^{n}\,b^{m}\,c^{p}+[p]\,[m]\,q\,a^{n}\,b^{m-1}\,c^{p-1}\big)\\
\Delta(e^{-}\,a\,b^{m}\,c^{p})&=&\beta\,[p](1-\epsilon\,q^{4})\,e^{+}([p+1]\,q^{8-4p}\,d\,b^{m+1}\,c^{p-1}+[p-1]\,q^{11-4p}\,d\,b^{m}\,c^{p-2})\\
&&+\,q^{3-2p}(\beta+\alpha\,\gamma\,q^{-4})e^{-}([p+1]\,[m+1]\,a\,b^{m}\,c^{\,p}+[p]\,[m]\,q\,a\,b^{m-1}\,c^{\,p-1})\\
\Delta(e^{+}\,d^{n}\,b^{m}\,c^{p})&=&q^{5-2p}(\beta\,\epsilon+\alpha\,q^{-6}) e^{+}([n+m]\,[p+1]\,d^{n}\,b^{m}\,c^{p}+[m]\,[p]\,q^{2n+1}\,d^{n}\,b^{m-1}\,c^{p-1})\\
 &&+\,\alpha (1-\gamma\,q^{-2})d^{n-2}\,e^{-}\left(\begin{array}{c}[n+m]\,[n+m-1]\,q^{-1-2n}\,b^{m}\,c^{p+2}\\+\,[m]\,[m-1]\,q^{2n-5}\,b^{m-2}\,c^{p}\\
 +\,[m]\,q^{-2}(1+q^{-2})[m+n-1]\,b^{m-1}\,c^{p+1}\end{array}\right)\\
 \Delta(e^{+}\,d\,b^{m}\,c^{p})&=&q^{5-2p}(\beta\,\epsilon+\alpha\,q^{-6}) e^{+}([m+1]\,[p+1]\,d\,b^{m}\,c^{p}+[m]\,[p]\,q^{3}\,d\,b^{m-1}\,c^{p-1})\\
 &&+\,\alpha\,[m] (1-\gamma\,q^{-2})\,q^{-3}\,e^{-}\left(\begin{array}{c}[m+1]\,q^{-1}\,a\,b^{m-1}\,c^{p+1}+[m-1]\,a\,b^{m-2}\,c^{p}\end{array}\right)\\
\Delta(e^{-}\,d^{n}\,b^{m}\,c^{p})&=&\beta\,[p]\,[p-1]\,q^{13-4p+2n}\,e^{+}(1-\epsilon\,q^{4})d^{n+2}\,b^{m}\,c^{p-2}\\
 &&+\,[p]\,q^{5-2p}\,e^{-}(\beta+\alpha\,\gamma\,q^{-4})([m+n+1]\,d^{n}\,b^{m}\,c^{p}+[m]\,q^{2n+1}\,d^{n}\,b^{m-1}\,c^{p-1})\\
\Delta(e^{+}\,b^{m}\,c^{p})&=&[m]\,q^{5-2p}\,e^{+}(\beta\,\epsilon+\alpha\,q^{-6})([p+1]\,b^{m}\,c^{p}+[p]\,q\,b^{m-1}\,c^{p-1})\\
&&+\,\alpha\,[m]\,[m-1]\,q^{-5}\,e^{-}(1-\gamma\,q^{-2})a^{2}\,b^{m-2}\,c^{p}\\
\Delta(e^{-}\,b^{m}\,c^{p})&=&\beta\,[p]\,[p-1]\,q^{13-4p}\,e^{+}(1-\epsilon\,q^{4})d^{2}\,b^{m}\,c^{\,p-2}\\
&&+\,[p]\,q^{5-2p}(\beta+\alpha\,\gamma\,q^{-4})e^{-}([m+1]\,b^{m}\,c^{p}+[m]\,q\,b^{m-1}\,c^{p-1})
\end{eqnarray*}
\end{prop}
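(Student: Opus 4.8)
The plan is to compute $\Delta=\delta\,\extd+\extd\,\delta$ on each of the eight families of basis $1$-forms in the statement by assembling the two halves from results already at hand. The piece $\extd\,\delta$ is the easy one: for a $1$-form $e^{\pm}a^{n}b^{m}c^{p}$ (and its $d$- and $b^{m}c^{p}$-variants) Proposition~\ref{vcdisuafvutyftuycudtuy} gives $\delta$ as an explicit grade-zero function, and then the displayed formula for $\extd$ on functions turns it straight back into a $1$-form. The piece $\delta\,\extd$ needs $\extd$ applied to a $1$-form, producing a $2$-form, to which Proposition~\ref{bncjdklsbkjh} then applies. The one ingredient not yet displayed is therefore $\extd$ on $1$-forms, and establishing it is the first step.

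To obtain $\extd$ on $1$-forms I would use $\extd(f\,\extd g)=\extd f\wedge\extd g$ (a consequence of $\extd\,\extd=0$ and the graded Leibniz rule): each target $1$-form $e^{\pm}\cdot(\text{monomial})$ is rewritten as a combination $\sum_i f_i\,\extd g_i$ via the relations $\extd a=q\,b\,e^{+}$, $\extd b=a\,e^{-}$, $\extd c=q\,d\,e^{+}$, $\extd d=c\,e^{-}$, so that $\extd$ of it is computed purely through the already-proved $\extd$ on functions together with the wedge relations $q^{2}e^{+}\wedge e^{-}+e^{-}\wedge e^{+}=0$ and $e^{\pm}\wedge e^{\pm}=0$, landing in the grade-zero $2$-forms $\{h\,e^{+}\wedge e^{-}\}$. (Equivalently one records a standalone $\extd$-on-$1$-forms formula, proved exactly as the functional case was.) Care is needed with the grading: $e^{+}$ by itself is not a sphere $1$-form, so one only differentiates the grade-zero combinations, and the bimodule relations $e^{\pm}a=q\,a\,e^{\pm}$, $e^{\pm}b=q^{-1}b\,e^{\pm}$, etc.\ must be used to normal-order coefficients throughout.

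With both halves in hand, for each family I would substitute, normal-order every coefficient back to the standard form $a^{n}b^{m}c^{p}$ (resp.\ $d^{n}b^{m}c^{p}$, $b^{m}c^{p}$), and collect. The simplifications are driven by the $q^{2}$-integer identities $[k+1]=1+q^{2}[k]=q^{2k}+[k]$ and $[m+n]=[m]+q^{2m}[n]$, which fuse pairs such as $[p+n]+q^{2}[p]$ into single $q$-integers, while the noncommutativity of the coefficients supplies the remaining $q$-powers. The recurring structure is that the $\delta\,\extd$ and $\extd\,\delta$ contributions share a diagonal part ($e^{\pm}$ returning to $e^{\pm}$), which adds to give the prefactors $(\beta\,\epsilon+\alpha\,q^{-6})$ and $(\beta+\alpha\,\gamma\,q^{-4})$, together with an off-diagonal raising/lowering part (in $e^{\mp}$) whose coefficient collapses exactly to $\alpha(1-\gamma\,q^{-2})$ or $\beta(1-\epsilon\,q^{4})$ -- precisely the discrepancy between the naive interior product $(\gamma,\epsilon)$ of (\ref{bjfildavblk}) and its Hodge value. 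Matching the collected expressions against the stated right-hand sides then completes each case.

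Finally I would handle the degenerate cases separately. When $n=1$ the generic answer would contain $a^{n-2}$ or $d^{n-2}$, which is meaningless; there the intermediate $2$-form $\extd(e^{-}a\,b^{m}c^{p})$ (resp.\ $\extd(e^{+}d\,b^{m}c^{p})$) reduces to the $b^{m}c^{p}$- or $d$-monomial cases of Proposition~\ref{bncjdklsbkjh}, and one reads off the separate formulas for $\Delta(e^{-}a\,b^{m}c^{p})$ and $\Delta(e^{+}d\,b^{m}c^{p})$. The main obstacle is not conceptual but the sheer volume of $q$-power bookkeeping: each of the eight cases requires normal-ordering several monomials through the bimodule relations and then matching $q$-exponents and $q^{2}$-integer combinations against the compact right-hand sides, where a single misplaced power of $q$ propagates. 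Getting the $\extd$-on-$1$-forms formula correct, with the grading subtlety resolved, is the one genuinely new input; everything after that is disciplined computation.
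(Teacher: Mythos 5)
Your proposal is correct and matches the route the paper's own machinery dictates (the paper states this proposition without an explicit proof, but its chain of results — $\extd$ on functions, Proposition~\ref{vcdisuafvutyftuycudtuy} for $\delta$ on 1-forms, and Proposition~\ref{bncjdklsbkjh} for $\delta$ on 2-forms — is assembled exactly as you describe, via $\Delta=\delta\,\extd+\extd\,\delta$). You also correctly identify the only genuinely missing ingredient ($\extd$ on 1-forms via $\extd(f\,\extd g)=\extd f\wedge\extd g$ with the grading and bimodule relations) and the need to treat the $n=1$ cases separately, so nothing essential is lacking beyond the bookkeeping you acknowledge.
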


\begin{prop}  \label{cbvdsjk11}
For a function $ f\in S_{q}^{2} $,
\begin{eqnarray*}
\Delta(a^{n}\,b^{m}\,c^{p}\,e^{+}\wedge e^{-})&=&[m]\,q^{7-2m}\\
&&(\beta\,\epsilon+\alpha\,\gamma\,q^{-8})e^{+} \wedge e^{-}\,([m+1]\,a^{\,n}\,b^{m}\,c^{\,p}+[p]\,q\,a^{\,n}\,b^{m-1}\,c^{\,p-1})\\
\Delta(d^{n}\,b^{m}\,c^{p}\,e^{+}\wedge e^{-})&=&[p]\,\,q^{7-2p}\\
&&(\beta\,\epsilon+\alpha\,\gamma\,q^{-8})e^{+}\wedge e^{-}([p+1]\,\,d^{n}\,b^{m}\,c^{\,p}+[m]\,q^{2n+1}\,d^{n}\,b^{m-1}\,c^{\,p-1})\\
\Delta(b^{m}\,c^{p}\,e^{+}\wedge e^{-})&=&[p]\,q^{7-2p}\\
&&(\beta\,\epsilon+\alpha\,\gamma\,q^{-8})e^{+}\wedge e^{-}([p+1]\,b^{m}\,c^{p}+[p]\,q\,b^{m-1}\,c^{p-1})\ .
\end{eqnarray*}
\end{prop}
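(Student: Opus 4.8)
The plan is to exploit that $S^2_q$ is two-dimensional, so that $\Omega^3 S^2_q=0$ and $\extd$ annihilates every $2$-form. Hence on $\Omega^2 S^2_q$ the operator $\Delta=\delta\,\extd+\extd\,\delta$ collapses to $\Delta=\extd\,\delta$, since the $\delta\,\extd$ term vanishes on top-degree forms. The computation then reduces to two steps: first apply $\delta$ to the given $2$-form, and then apply $\extd$ to the resulting $1$-form. The first step is already supplied by Proposition~\ref{bncjdklsbkjh}, which expresses $\delta(a^n b^m c^p\,e^+\wedge e^-)$, $\delta(d^n b^m c^p\,e^+\wedge e^-)$ and $\delta(b^m c^p\,e^+\wedge e^-)$ as explicit $1$-forms, each a sum of an $e^+$-term and an $e^-$-term with monomial coefficients carrying the parameters $\alpha,\gamma$ and $\beta,\epsilon$ respectively, coming from the interior-product convention (\ref{bjfildavblk}).

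For the second step I would compute $\extd$ on a generic $1$-form $e^\pm f$ via the graded Leibniz rule, $\extd(e^\pm f)=(\extd e^\pm)\,f-e^\pm\wedge\extd f$, using the differential on functions from Section~\ref{vcadisuvuid} together with the values of $\extd e^\pm$ fixed by requiring $\extd^2=0$ on the generators $a,b,c,d$. Being in top degree, everything can be pushed onto the single basis $2$-form $e^+\wedge e^-$ using $e^\pm\wedge e^\pm=0$, $e^-\wedge e^+=-q^2\,e^+\wedge e^-$, and the bimodule relations $e^\pm a=q\,a\,e^\pm$, $e^\pm b=q^{-1}b\,e^\pm$, and so on. In particular, when forming $-e^+\wedge\extd f$ only the $e^-$-component of $\extd f$ survives (and symmetrically only the $e^+$-component when wedging with $e^-$), so each $1$-form from step one contributes a single monomial multiple of $e^+\wedge e^-$. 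Applying this to those $1$-forms and collecting terms yields $\Delta$ of each $2$-form as a function multiple of the volume form.

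The final step is to simplify the accumulated coefficients into the stated closed form using the $q^2$-integer recurrences for $[m],[m+1],[p]$, etc. A strong guide and consistency check is the structural parallel with $\Delta$ on functions in Proposition~\ref{prop2}: the $2$-form answer has exactly the same shape, with the prefactor $q^{3-2(\,\cdot\,)}$ promoted to $q^{7-2(\,\cdot\,)}$ and the scalar $(\beta+\alpha\,q^{-2})$ replaced by $(\beta\,\epsilon+\alpha\,\gamma\,q^{-8})$, the replacements being precisely those introduced by contracting $v^\pm$ against $e^+\wedge e^-$ rather than against a $1$-form. I expect the main obstacle to be bookkeeping rather than any conceptual difficulty: correctly tracking the powers of $q$ through the two successive noncommutative reorderings in the derivation rule, and confirming that the parameters enter exactly in the combinations $\beta\,\epsilon$ and $\alpha\,\gamma$ without getting crossed. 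Organizing the calculation so that it manifestly mirrors the function computation of Proposition~\ref{prop2} is the cleanest way to keep this under control.
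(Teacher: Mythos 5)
Your skeleton is exactly the paper's implicit computation: since $\Omega^3S^2_q=0$, on $2$-forms $\Delta=\extd\,\delta$; $\delta$ is read off from Proposition~\ref{bncjdklsbkjh}; one then applies $\extd$ to the resulting $1$-forms and pushes everything onto $e^+\wedge e^-$ with the wedge and bimodule relations. The consistency check against Proposition~\ref{prop2} (prefactor $q^{3-2(\cdot)}\mapsto q^{7-2(\cdot)}$, scalar $(\beta+\alpha q^{-2})\mapsto(\beta\epsilon+\alpha\gamma q^{-8})$) is also sound.

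The genuine gap is your method for differentiating the $1$-forms: you propose to fix $\extd e^{\pm}$ by demanding $\extd^2=0$ on $a,b,c,d$ and then use the graded Leibniz rule. No such $\extd e^{\pm}$ exist. Any candidate must have the form $\extd e^+=g\,e^+\wedge e^-$ with $g\in\mathbb{C}_q[SL_2]$, and $\extd^2a=q\big(\extd b\wedge e^+ + b\,\extd e^+\big)=q\big({-q^2}a\,e^+\wedge e^- + b\,g\,e^+\wedge e^-\big)$, so $\extd^2a=0$ forces $b\,g=q^2a$, which is impossible: left multiplication by $b$ sends every PBW monomial $a^nb^mc^p$, $d^nb^mc^p$ to a multiple of a monomial with $b$-exponent at least one, so $q^2a$ is not in its image. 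Worse, $\extd^2a=0$ gives $b\,\extd e^+=q^2a\,e^+\wedge e^-$ while $\extd^2c=0$ gives $d\,\extd e^+=q^2c\,e^+\wedge e^-$, and multiplying these by $-q^{-1}c$ and $a$ respectively and using $ad-q^{-1}cb=1$, $ca=qac$ forces $\extd e^+=0$, a contradiction. This reflects the known fact that the two-generator calculus is a DGA only on the grade-zero subalgebra $S^2_q$, not on all of $\mathbb{C}_q[SL_2]$; consistency on the whole quantum group requires the third generator $e^0$ of the Woronowicz 3D calculus, with $\extd e^\pm$ proportional to $e^0\wedge e^\pm$. The repair is that for total-grade-zero (i.e.\ sphere) $1$-forms all $\extd e^\pm$ (equivalently $e^0$) contributions cancel, so the correct effective rule is simply $\extd(e^\pm f)=-\,e^\pm\wedge\extd f$, with $\extd f$ taken from the monomial formulas of Section~\ref{vcadisuvuid}; one can justify this by descending from the 3D calculus, or by writing each sphere $1$-form as $\sum_i f_i\,\extd g_i$ with $f_i,g_i\in S^2_q$ and using $\extd\big(\sum_i f_i\,\extd g_i\big)=\sum_i\extd f_i\wedge\extd g_i$. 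With that rule your plan goes through and does reproduce the statement: for instance $\extd\big(e^-ab^{m-1}c^m\big)=-e^-\wedge\extd(ab^{m-1}c^m)$ and $\extd\big(e^+db^mc^{m-1}\big)=-e^+\wedge\extd(db^mc^{m-1})$, combined with Proposition~\ref{bncjdklsbkjh}, give $[m]\,q^{7-2m}(\beta\epsilon+\alpha\gamma q^{-8})\big([m+1]b^mc^m+q[m]b^{m-1}c^{m-1}\big)e^+\wedge e^-$, and the same rule also reproduces the $\delta\,\extd$ terms in Proposition~\ref{cbvdsjk}.
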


\section{The Hodge operation} \label{bcdilskjhcj}

Classically, the Hodge operator is a map $\diamondsuit : \Omega^{n}M \to \Omega^{\mathrm{top}-n}M $, where $ \mathrm{top} $ is the dimension of the manifold $ M $. As we already have a star operation on the algebra, it would be confusing to use star for the Hodge operation, as they can both be applied to the same objects. 
On the manifold $ M $ we have an inner product for $ \Omega^{n}M $ given for $ \eta, \xi \in \Omega^{n}M  $
\begin{equation}\label{eq22}
\langle \eta,\xi  \rangle=\int_{M}(\eta\wedge\diamondsuit\,\xi)\ .
\end{equation}
Here $ \eta\wedge \diamondsuit\,\xi $ is a $ n+(\mathrm{top}-n)=\mathrm{top} $ form.
By Stokes' theorem for orientated $M$, for $\xi\in \Omega^rM$ and $\eta\in \Omega^{r-1} M$, 
\begin{eqnarray}
0\,=\,\int_M\extd(\eta\wedge\diamondsuit\,\xi)\,=\,\int_M(\extd\eta)\wedge\diamondsuit\,\xi-(-1)^r\,
\int_M\eta\wedge\extd(\diamondsuit\,\xi)\ ,
\end{eqnarray}
and this can be rewritten (assuming the invertibility of $\diamondsuit$) as
\begin{eqnarray}
\<\extd\eta,\xi\> \,=\, (-1)^{r}\,\<\eta,\diamondsuit^{-1}\extd\diamondsuit\,\xi\>\ .
\end{eqnarray}
A method for ensuring that the Laplacian is positive, which is taken classically, is to take $\delta$ to the be the operator adjoint of $\extd$, i.e.\ for $\xi\in \Omega^rM$
\begin{eqnarray}   \label{vbcxyfukhgc}
\delta(\xi)\,=\,(-1)^{r}\,\diamondsuit^{-1}\extd\diamondsuit\,\xi\ .
\end{eqnarray}
We wish to use the Hodge operator on the noncommutative sphere. A reading of \cite{majspinsphere} will show that this operation has already been defined, but with different formulae to the ones we will use. 
Our purposes differ from those of \cite{majspinsphere}, where the Hodge operation was a module map. 
As we are interested in functional analysis and positivity, our Hodge operation is conjugate linear to make (\ref{eq22}) into a Hermitian inner product, in fact it is a module map from $\Omega^{n}S^2_q$ to the conjugate module of $\Omega^{\mathrm{top}-n}S^2_q$.

\begin{defn}\label{de1}
For the noncommutative sphere, we use $\diamondsuit\,:\Omega^{n}S^{2}_{q}\longrightarrow \Omega^{2-n}S^{2}_{q}$, and suppose that we can write 
\begin{eqnarray*}
\diamondsuit\,(e^{+}\,f)&=&Ke^{-}\,f^{*}\ ,\quad 
\diamondsuit\,(e^{-}\,f)\,=\,Me^{+}\,f^{*}\\
\diamondsuit\,(e^{+}\wedge e^{-} \,f)&=&-q^{-2}\,\diamondsuit\,(e^{-}\wedge e^{+} \,f)=-q^{-2}\,Lf^{*}\\
\diamondsuit\,(f)&=&Ne^{+}\wedge e^{-} \,f^{*}.
\end{eqnarray*}
\end{defn} 

Remember that a Hermitian inner product obeys $\<\xi,\eta\>^*=\<\eta,\xi\>$.

\begin{prop}\label{prop4}
The formula 
\begin{eqnarray}
\langle \eta,\xi  \rangle &=& \int_{M}(\eta\wedge\diamondsuit\,\xi).
\end{eqnarray}
gives a Hermitian inner product (not yet known to be positive) if $ K,M,L,N $ are imaginary.
\end{prop}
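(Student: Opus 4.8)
The plan is to verify the two defining properties of a Hermitian inner product: sesquilinearity and the conjugate-symmetry $\langle\eta,\xi\rangle^{*}=\langle\xi,\eta\rangle$. Sesquilinearity is immediate and needs no hypothesis on the constants: the wedge product and $\int_{M}$ are both linear, while $\diamondsuit$ is conjugate linear, so $\langle\eta,\xi\rangle$ is linear in $\eta$ and conjugate linear in $\xi$. All of the work is therefore in the symmetry relation, and this is where the reality conditions on $K,M,L,N$ will be used.

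For the symmetry I would compute $\langle\eta,\xi\rangle$ and $\langle\xi,\eta\rangle$ separately on each of the three degrees $n=0,1,2$, taking $\xi,\eta$ to be basis elements ($f$, $e^{\pm}f$, or $e^{+}\wedge e^{-}f$) and extending by sesquilinearity. In each case I would use the formulas of Definition~\ref{de1} to evaluate $\diamondsuit\xi$, then wedge with $\eta$ and push every algebra element to the right through $e^{\pm}$ using the commutation relations $e^{\pm}x=q^{\,\mathrm{grade}(x)}\,x\,e^{\pm}$ together with $e^{\pm}\wedge e^{\pm}=0$ and $e^{-}\wedge e^{+}=-q^{2}\,e^{+}\wedge e^{-}$. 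The outcome in every degree is a top form $e^{+}\wedge e^{-}\,h$ with $h$ of grade zero, so that $\langle\eta,\xi\rangle$ reduces to the single functional $\int_{M}(e^{+}\wedge e^{-}\,h)$.

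The heart of the argument is the behaviour of this functional under conjugation, and two facts feed in. First, $\int_{M}$ is compatible with the star, $\overline{\int_{M}\omega}=\int_{M}\omega^{*}$, i.e.\ it is real on Hermitian top forms. Second, the induced star on forms (fixed by $\extd\circ *=*\circ\extd$ together with $a^{*}=d$ and $b^{*}=-q^{-1}c$) gives $(e^{+})^{*}=-q^{-1}e^{-}$ and $(e^{-})^{*}=-q\,e^{+}$, whence the volume form is \emph{anti}-Hermitian, $(e^{+}\wedge e^{-})^{*}=-e^{+}\wedge e^{-}$. Combining the two, the functional $h(x):=\int_{M}(e^{+}\wedge e^{-}\,x)$ on grade-zero elements satisfies $\overline{h(x)}=-\,h(x^{*})$. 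It is precisely this extra minus sign, coming from the anti-Hermiticity of the volume form, that converts the naive reality requirement into the condition that the constants be imaginary: for instance in degree $1$ one finds $\langle\eta,\xi\rangle=K q^{2}h(g_{+}f_{+}^{*})-M\,h(g_{-}f_{-}^{*})$, and applying $\overline{h(x)}=-h(x^{*})$ turns $\langle\eta,\xi\rangle^{*}$ into $-\overline{K}q^{2}h(f_{+}g_{+}^{*})+\overline{M}\,h(f_{-}g_{-}^{*})$, which equals $\langle\xi,\eta\rangle$ exactly when $\overline{K}=-K$ and $\overline{M}=-M$. The same computation in degree $0$ produces the condition on $N$ and in degree $2$ the condition on $L$.

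I expect the main obstacle to be the sign bookkeeping rather than anything conceptual: one must reduce each $\eta\wedge\diamondsuit\xi$ to the standard form $e^{+}\wedge e^{-}\,h$ correctly, which involves commuting functions of nonzero grade past $e^{\pm}$ and using the $q^{2}$ skew relation for $e^{-}\wedge e^{+}$, and one must not drop the crucial sign in $(e^{+}\wedge e^{-})^{*}=-e^{+}\wedge e^{-}$. Once the anti-Hermiticity of the volume form is in hand, the passage from the reality of $\int_{M}$ to the imaginary conditions on $K,M,L,N$ is forced, and no positivity input is needed, consistent with the parenthetical caveat in the statement.
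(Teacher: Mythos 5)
Your computations check out at every point where they can be verified against the paper's data: sesquilinearity, the reduction of each same-degree pairing to a multiple of $e^{+}\wedge e^{-}\,h$ (your degree-one formula $\langle\eta,\xi\rangle=Kq^{2}\,h(g_{+}f_{+}^{*})-M\,h(g_{-}f_{-}^{*})$ is exactly what the relations $e^{\pm}x=q^{|x|}xe^{\pm}$ and $e^{-}\wedge e^{+}=-q^{2}e^{+}\wedge e^{-}$ give), and the star formulas $(e^{+})^{*}=-q^{-1}e^{-}$, $(e^{-})^{*}=-q\,e^{+}$ derived from $\extd\circ{*}={*}\circ\extd$, whence $(e^{+}\wedge e^{-})^{*}=-e^{+}\wedge e^{-}$ (note this last step also uses the graded convention $(\xi\wedge\eta)^{*}=(-1)^{|\xi||\eta|}\eta^{*}\wedge\xi^{*}$, which you invoke only silently). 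The paper states this proposition without proof, and your mechanism --- reality of $\int_{M}$ combined with anti-Hermiticity of the volume form, so that conjugation of the integrated pairing produces an extra sign --- is the only mechanism that can yield ``imaginary'' rather than ``real'', so it is surely the intended argument. You are also right that no positivity input is needed, and you should record explicitly (as you do implicitly by working degree by degree) that pairings of forms of different degrees vanish because the integrand is then not a top form.

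The one genuine soft spot is your ``first fact'', $\overline{\int_{M}\omega}=\int_{M}\omega^{*}$. This is a normalization choice, not something derivable, and it conflicts with the paper's table of integrals if that table is read as giving the values of $h(x)=\int_{M}(e^{+}\wedge e^{-}\,x)$. Your two facts together force $\int_{M}(e^{+}\wedge e^{-})$ to be purely imaginary, whereas the table gives $\int (bc)^{0}=1$; more generally the functional determined by the table is star-compatible, $\overline{h(x)}=+\,h(x^{*})$, since each $(bc)^{p}$ is self-adjoint with real nonzero integral and the star exchanges the $a$-monomials with the $d$-monomials, both of integral zero. Under that literal reading, your own reduction would give $\overline{K}=K$, $\overline{M}=M$, $\overline{L}=L$, $\overline{N}=N$, i.e.\ the constants would come out \emph{real}, contradicting the proposition. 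To make the argument airtight you must therefore state that you normalize the two-form integral against a Hermitian volume form such as $i\,e^{+}\wedge e^{-}$ (equivalently, you impose $\overline{\int_{M}\omega}=\int_{M}\omega^{*}$ and accept that the functional trivialized against $e^{+}\wedge e^{-}$ then takes imaginary values), and that this normalization --- rather than the one suggested by the paper's table --- is what the stated conclusion implicitly requires. With that caveat made explicit, your proof is complete and coincides with what the paper must intend.
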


We are using the standard idea of integrating a top dimensional form, which corresponds to integrating a function by multiplying the fiunction by a non-vanishing top form. The resulting integral on $S^2_q$ can be calculated from the Haar integral on the quantum group $ \mathbb{C}_{q}[SL_{2}] $, but given our previous calculations, here it is easier to use the cohomological definition based on $H^2_{dR}(S^2_q)\cong\mathbb{C}$.
In either case, the result is, for $n\ge 1$,
\begin{eqnarray}
\int (b\,c)^{p}=\dfrac{(-q)^{p}}{[p+1]}\ , \quad \int a^{n}\,b^{m}\,c^{p}=0=\int d^{n}\,b^{m}\,c^{p}\ .
\end{eqnarray}
Now we consider the consequences of using the usual formula (see \cite{GHalgGeom}) for applying the Hodge operator twice. 

\begin{prop}\label{prop5}
The Hodge dual obeys 
\begin{eqnarray}\label{eq28}
 \diamondsuit\,\diamondsuit\,\eta &=& (-1)^{k(2-k)}\,\eta.
\end{eqnarray}
for all $ \eta \in \Omega^{k}S^{2}_{q} $ if and only if $ KM=1 $ and $ LN=q^{2} $ in the Definition (\ref{de1}).
\end{prop}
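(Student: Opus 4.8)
The plan is to use that $\diamondsuit$ is conjugate linear and additive, so it is determined by its values on the four module generators $1,\,e^{+},\,e^{-},\,e^{+}\wedge e^{-}$, and every form is an algebra multiple of one of these. Hence it suffices to verify $\diamondsuit\diamondsuit\,\eta=(-1)^{k(2-k)}\,\eta$ on forms of the shape $f$, $e^{+}f$, $e^{-}f$ and $e^{+}\wedge e^{-}f$ with $f\in S^{2}_{q}$ arbitrary, and then read off the resulting constraints on $K,M,L,N$.

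First I would compute $\diamondsuit\diamondsuit$ on each of the four types directly from Definition~\ref{de1}, using two facts: $\diamondsuit$ is conjugate linear, so the scalar produced by the first application is conjugated by the second, and the algebra star is an involution, so $f^{**}=f$. For a $1$-form this gives $\diamondsuit\diamondsuit(e^{+}f)=\diamondsuit(Ke^{-}f^{*})=\bar K M\,e^{+}(f^{*})^{*}=\bar K M\,e^{+}f$, and likewise $\diamondsuit\diamondsuit(e^{-}f)=\bar M K\,e^{-}f$. For a $0$-form, $\diamondsuit f=Ne^{+}\wedge e^{-}f^{*}$ yields $\diamondsuit\diamondsuit f=-q^{-2}L\bar N\,f$, and for a $2$-form $\diamondsuit\diamondsuit(e^{+}\wedge e^{-}f)=-q^{-2}\bar L N\,e^{+}\wedge e^{-}f$; here one uses that the coefficient $-q^{-2}$ is real and that the two clauses for $e^{\pm}\wedge e^{\mp}$ in Definition~\ref{de1} are consistent through $e^{-}\wedge e^{+}=-q^{2}\,e^{+}\wedge e^{-}$, which should be recorded once so that $\diamondsuit$ is well defined on $\Omega^{2}S^{2}_{q}$.

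Comparing with the target sign $(-1)^{k(2-k)}$, which is $+1$ for $k=0,2$ and $-1$ for $k=1$, the four computations demand exactly $\bar K M=-1$, $\bar M K=-1$, $L\bar N=-q^{2}$ and $\bar L N=-q^{2}$. Invoking Proposition~\ref{prop4}, so that $K,M,L,N$ are imaginary and $\bar K=-K$ and so on, these collapse to the two independent conditions $KM=1$ and $LN=q^{2}$. This proves both directions simultaneously: if the identity holds on all forms it holds on the four generators, forcing $KM=1$ and $LN=q^{2}$; conversely, substituting $KM=1$ and $LN=q^{2}$ into the four computed coefficients reproduces the correct sign on each type, and additivity together with conjugate linearity extend the identity to all of $\Omega^{*}S^{2}_{q}$.

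The substitutions themselves are routine; the one delicate point, and the main obstacle, is the placement of the complex conjugations. Because $\diamondsuit$ is antilinear the scalar emitted at the first step is conjugated at the second, so the naive coefficient $KM$ is really $\bar K M$, and it is only after using that the constants are imaginary that the signs line up. This conjugation is precisely what converts the four sign conditions into the clean pair $KM=1$ and $LN=q^{2}$.
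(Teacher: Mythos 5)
Your proof is correct, and it is essentially the verification the paper leaves implicit (Proposition~\ref{prop5} is stated there without proof): evaluate $\diamondsuit\,\diamondsuit$ directly on the four generating shapes $f$, $e^{+}f$, $e^{-}f$, $e^{+}\wedge e^{-}f$ from Definition~\ref{de1}, use the relation $e^{-}\wedge e^{+}=-q^{2}\,e^{+}\wedge e^{-}$ for consistency in degree $2$, and compare with the sign $(-1)^{k(2-k)}$. You also correctly isolate the one genuine subtlety: antilinearity of $\diamondsuit$ makes the naive coefficients $\bar{K}M$, $-q^{-2}L\bar{N}$, $-q^{-2}\bar{L}N$, and only the imaginarity of the constants carried over from Proposition~\ref{prop4} turns the resulting conditions into the stated pair $KM=1$, $LN=q^{2}$.
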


Now we compare the formula (\ref{vbcxyfukhgc}) for $\delta$ in terms of the Hodge operation with the formulae in Proposition~\ref{bncjdklsbkjh}, which refer to the interior product in (\ref{bjfildavblk}).

\begin{prop}\label{prop 1}
For $ \xi\in \Omega^{n}S^{2}_{q} $, the equation $(-1)^{n}\, \diamondsuit\,\delta\,\xi= \extd\,\diamondsuit\,\xi $ is satisfied, as long as the constants in (\ref{de1}) obey
\begin{eqnarray*}
\alpha\,=\,\frac{K}{N}\,q^{5}\,=\, \frac{L}{\gamma\,M}\,q^{5}  \ , \quad \beta\,=\,  -\,\frac{M}{N}\,q^{-3} 
\,=\,  -\,\dfrac{L}{\epsilon\,K}\, q^{-9} \ .
\end{eqnarray*}
\end{prop}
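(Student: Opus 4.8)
The identity $(-1)^{n}\,\diamondsuit\,\delta\,\xi=\extd\,\diamondsuit\,\xi$ is precisely the assertion that the interior-product codifferential $\delta$ of Propositions~\ref{vcdisuafvutyftuycudtuy} and~\ref{bncjdklsbkjh} agrees with the Hodge codifferential $(-1)^{n}\,\diamondsuit^{-1}\extd\,\diamondsuit$ of (\ref{vbcxyfukhgc}). Since both sides are (conjugate-)linear, the plan is to verify the equation degree by degree on the monomial bases of $\Omega^{n}S^{2}_{q}$ for $n=0,1,2$, reading off the resulting constraints on $\alpha,\beta$ by matching the coefficients of $e^{+}$, $e^{-}$ and $e^{+}\wedge e^{-}$. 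The degree $n=0$ case is immediate: $\delta$ annihilates functions since $\Omega^{-1}S^{2}_{q}=0$, so the left-hand side vanishes, while $\diamondsuit f$ is a top $2$-form and $\extd$ of a $2$-form is zero. No condition arises here.

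For $n=2$ I would take $\xi=a^{n}b^{m}c^{p}\,e^{+}\wedge e^{-}$, together with the $d^{n}b^{m}c^{p}$ and $b^{m}c^{p}$ families. The right-hand side uses only the differential on functions: to apply $\diamondsuit$ via Definition~\ref{de1} one first commutes $a^{n}b^{m}c^{p}$ through $e^{+}\wedge e^{-}$ (producing an explicit power of $q$, e.g. $a\,e^{+}\wedge e^{-}=q^{-2}\,e^{+}\wedge e^{-}\,a$), then applies the $*$-operation, after which $\extd\,\diamondsuit\,\xi$ is computed from the formula for $\extd$ on functions and yields both an $e^{+}$ and an $e^{-}$ term with coefficient proportional to $L$. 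The left-hand side computes $\delta\,\xi$ from Proposition~\ref{bncjdklsbkjh} (whose $e^{-}$ part carries $\alpha\gamma$ and whose $e^{+}$ part carries $\beta\epsilon$, reflecting (\ref{bjfildavblk})) and then applies $\diamondsuit$, sending the $e^{-}$ term to $M e^{+}$ and the $e^{+}$ term to $K e^{-}$. Matching the $e^{+}$ coefficients then forces $\alpha=\tfrac{L}{\gamma M}\,q^{5}$, and matching the $e^{-}$ coefficients forces $\beta=-\tfrac{L}{\epsilon K}\,q^{-9}$.

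For $n=1$ I would take $\xi=e^{+}a^{n}b^{m}c^{p}$ and $\xi=e^{-}a^{n}b^{m}c^{p}$. Here the left-hand side is the easy direction: $\delta\,\xi$ is a function given by Proposition~\ref{vcdisuafvutyftuycudtuy} (the $e^{+}$ family producing an $\alpha$ term, the $e^{-}$ family a $\beta$ term), and $\diamondsuit$ sends it to $N$ times an $e^{+}\wedge e^{-}$ term. The right-hand side instead needs $\extd$ applied to the $1$-form $\diamondsuit\,\xi=K e^{-}(\cdots)^{*}$ or $M e^{+}(\cdots)^{*}$. This differential on $1$-forms is the one already fixed by the calculus and used to obtain Proposition~\ref{cbvdsjk}; since the $2$-forms are a rank-one free module on $e^{+}\wedge e^{-}$, it is determined by a single function coefficient obtained from the graded Leibniz rule and the known $\extd$ on functions. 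Matching coefficients then gives $\alpha=\tfrac{K}{N}\,q^{5}$ from the $e^{+}$ family and $\beta=-\tfrac{M}{N}\,q^{-3}$ from the $e^{-}$ family. Imposing all four relations at once is exactly the chain of equalities in the statement, so that the $n=1$ and $n=2$ identities both hold while $n=0$ is automatic; the remaining monomial families $d^{n}b^{m}c^{p}$ and $b^{m}c^{p}$ are checked identically and reproduce the same conditions.

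The routine but genuinely delicate part, and the main obstacle, is the bookkeeping of the powers of $q$ produced whenever functions are commuted past $e^{\pm}$ and past $e^{+}\wedge e^{-}$, combined with the conjugate-linear $*$-operation built into $\diamondsuit$, whose exponents $(-1)^{p+m}q^{-n(p+m)+p-m}$ must be propagated consistently through every term on both sides. A helpful internal safeguard is Proposition~\ref{prop5}: because $\diamondsuit$ is invertible with $\diamondsuit\,\diamondsuit=(-1)^{k(2-k)}$, each degree's pointwise identity can be re-expressed as the corresponding adjoint pairing $\langle\extd\eta,\xi\rangle=\langle\eta,\delta\xi\rangle$, so any stray sign or power of $q$ in one formulation will surface as a mismatch in the other and can be localised before matching the final coefficients.
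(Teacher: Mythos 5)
Your proposal is correct and follows essentially the same route as the paper, which states this proposition as the outcome of precisely this computation: comparing the interior-product $\delta$ of Propositions~\ref{vcdisuafvutyftuycudtuy} and~\ref{bncjdklsbkjh} against $\extd\,\diamondsuit$ via Definition~\ref{de1}, degree by degree on monomials, and matching $e^{+}$, $e^{-}$ and $e^{+}\wedge e^{-}$ coefficients (the $n=1$ check yielding $\alpha=\tfrac{K}{N}q^{5}$, $\beta=-\tfrac{M}{N}q^{-3}$ and the $n=2$ check yielding $\alpha=\tfrac{L}{\gamma M}q^{5}$, $\beta=-\tfrac{L}{\epsilon K}q^{-9}$, with $n=0$ vacuous). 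Your attribution of which constants enter at each degree, and the commutation/$*$-operation bookkeeping you flag, agree with the paper's setup.
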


\begin{cor}\label{cor1}
To have $(-1)^{n}\, \diamondsuit\,\delta\,\xi= \extd\,\diamondsuit\,\xi $ and $ \diamondsuit\,\diamondsuit\,\xi = (-1)^{n(2-n)\,\xi} $, for $ \xi \in \Omega^{n}A $ for all $ n $, we need 
for $ K,L $ imaginary,
\begin{equation*}
\alpha=K\,L\,q^{3} , \quad \beta = -\,\frac{L}{K}\,q^{-5},\quad  \gamma=q^{2} , \quad \epsilon=q^{-4},
\end{equation*}
\end{cor}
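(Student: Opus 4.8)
The plan is to combine the two sets of constraints---those from Proposition~\ref{prop5} (forcing $\diamondsuit\,\diamondsuit=(-1)^{k(2-k)}$) and those from Proposition~\ref{prop 1} (forcing $(-1)^n\,\diamondsuit\,\delta\,\xi=\extd\,\diamondsuit\,\xi$)---and simply solve the resulting system for $\alpha,\beta,\gamma,\epsilon$ in terms of the two remaining free parameters $K$ and $L$. The corollary is purely a matter of substitution; there is no independent analytic content beyond the two propositions already proved.

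First I would invoke Proposition~\ref{prop5}: the double-Hodge identity forces $KM=1$ and $LN=q^2$, so that $M=K^{-1}$ and $N=q^2L^{-1}$, reducing the four constants $K,M,L,N$ to the two free parameters $K,L$. Substituting $N=q^2L^{-1}$ into the first expression $\alpha=(K/N)\,q^5$ of Proposition~\ref{prop 1} gives $\alpha=KLq^3$, and substituting $M=K^{-1}$, $N=q^2L^{-1}$ into $\beta=-(M/N)\,q^{-3}$ gives $\beta=-(L/K)\,q^{-5}$, exactly as stated.

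Next I would read the \emph{remaining} two formulae of Proposition~\ref{prop 1}, namely $\alpha=\frac{L}{\gamma M}q^{5}$ and $\beta=-\frac{L}{\epsilon K}q^{-9}$, as equations determining $\gamma$ and $\epsilon$. Inserting $M=K^{-1}$ together with the value $\alpha=KLq^{3}$ into the first yields $KLq^{3}=\frac{KL}{\gamma}q^{5}$, hence $\gamma=q^{2}$; inserting $\beta=-(L/K)q^{-5}$ into the second yields $-(L/K)q^{-5}=-\frac{L}{\epsilon K}q^{-9}$, hence $\epsilon=q^{-4}$. Finally, since $q$ is real, $M=K^{-1}$ is imaginary precisely when $K$ is, and $N=q^{2}L^{-1}$ is imaginary precisely when $L$ is, so the hypothesis of Proposition~\ref{prop4} that all four of $K,M,L,N$ be imaginary collapses to requiring only $K$ and $L$ imaginary.

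The only point that needs a moment's care---and the nearest thing to an obstacle---is confirming that the two separate expressions for $\alpha$ (and likewise the two for $\beta$) in Proposition~\ref{prop 1} are mutually consistent once the relations $KM=1$, $LN=q^{2}$ are imposed. Rather than over-determining the system, this consistency is exactly what pins down the otherwise-free interior-product constants $\gamma$ and $\epsilon$, which is the conceptual payoff: Hodge theory predicts the values $\gamma=q^{2}$, $\epsilon=q^{-4}$ that were left undetermined in~(\ref{bjfildavblk}).
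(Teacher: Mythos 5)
Your proposal is correct and is essentially the intended argument: the paper states the corollary without a separate proof precisely because it follows by combining Proposition~\ref{prop5} (giving $M=K^{-1}$, $N=q^{2}L^{-1}$) with the two pairs of expressions in Proposition~\ref{prop 1}, exactly as you do, and your arithmetic ($\alpha=KLq^{3}$, $\beta=-(L/K)q^{-5}$, then consistency forcing $\gamma=q^{2}$, $\epsilon=q^{-4}$) checks out. Your closing observation that imaginarity of $K,L$ already forces $M,N$ imaginary (so Proposition~\ref{prop4}'s hypothesis reduces to the stated condition) is also the right way to account for the ``$K,L$ imaginary'' clause.
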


\begin{remark}
The values of the constants derived from Hodge theory give a considerable simplification in the formulae for $\Delta$, as noted in Section~\ref{vchdjksjhfx}. While part of the reason may be that $\Delta$ is self adjoint, there may also be another reason, which might have a bearing on any probabilistic interpretation. To use the heat equation in Brownian motion, there is one basic fact -- the integral of a function is constant under the time evolution, corresponding to interpretation that no particle paths are lost or gained. In the inner product notation, the integral of a function $f$ is $\<f,1\>$, and saying that this is conserved under the heat equation evolution corresponds to showing that $\<\delta\extd f,1\>=0$. If we use the fact that $\delta$ is the adjoint of $\extd$, this is $\<\extd f,\extd(1)\>=0$. 
\end{remark}

\section{Harmonic Analysis for Forms on $ S^{2}_{q} $.}  \label{vchdjksjhfx}
The value of $ \gamma $ and $ \epsilon $ from Hodge theory in Corollary (\ref{cor1}) give a considerable simplification in the formula for $ \Delta $ on 1-forms, as can be seen by comparing Proposition~\ref{bvdijlsdbvil} to Proposition~\ref{cbvdsjk}.

\begin{prop}  \label{bvdijlsdbvil}
On 1-forms $\Delta$ has the form, with $\gamma=q^{2}$ and $\epsilon=q^{-4}$,
\begin{eqnarray*}
\Delta(e^{+}\,a^{n}\,b^{m}\,c^{p})&=&(\beta+\alpha\,q^{-2})[m]\,q^{1-2p-2n}\,e^{+}([n+p+1]a^{n}\,b^{m}\,c^{\,p}+q\,[p]\,a^{n}\,b^{m-1}\,c^{\,p-1})\\
\Delta(e^{-}\,a^{n}\,b^{m}\,c^{p})&=&q^{5-2p-2n}(\beta+\alpha\,q^{-2})e^{-}\big([m+2]\,[m+1]\,a^{n}\,b^{m}\,c^{p}+[p]\,[m]\,q\,a^{n}\,b^{m-1}\,c^{p-1}\big)\\
\Delta(e^{-}\,a\,b^{m}\,c^{p})&=&q^{3-2p}(\beta+\alpha\,\gamma\,q^{-4})e^{-}([p+1]\,[m+1]\,a\,b^{m}\,c^{\,p}+[p]\,[m]\,q\,a\,b^{m-1}\,c^{\,p-1})\\
\Delta(e^{+}\,d^{n}\,b^{m}\,c^{p})&=&q^{1-2p}(\beta+\alpha\,q^{-2}) e^{+}([n+m]\,[p+1]\,d^{n}\,b^{m}\,c^{p}+[m]\,[p]\,q^{2n+1}\,d^{n}\,b^{m-1}\,c^{p-1})\\
\Delta(e^{+}\,d\,b^{m}\,c^{p})&=&q^{1-2p}(\beta+\alpha\,q^{-2}) e^{+}([m+1]\,[p+1]\,d\,b^{m}\,c^{p}+[m]\,[p]\,q^{3}\,d\,b^{m-1}\,c^{p-1})\\
\Delta(e^{-}\,d^{n}\,b^{m}\,c^{p})&=&[p]\,q^{5-2p}\,e^{-}(\beta+\alpha\,q^{-2})([m+n+1]\,d^{n}\,b^{m}\,c^{p}+[m]\,q^{2n+1}\,d^{n}\,b^{m-1}\,c^{p-1})\\
\Delta(e^{+}\,b^{m}\,c^{p})&=&[m]\,q^{1-2p}\,e^{+}(\beta+\alpha\,q^{-2})([p+1]\,b^{m}\,c^{p}+[p]\,q\,b^{m-1}\,c^{p-1})\\
\Delta(e^{-}\,b^{m}\,c^{p})&=&[p]\,q^{5-2p}(\beta+\alpha\,q^{-2})e^{-}([m+1]\,b^{m}\,c^{p}+[m]\,q\,b^{m-1}\,c^{p-1})
\end{eqnarray*}
\end{prop}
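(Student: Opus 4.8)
The plan is to derive Proposition~\ref{bvdijlsdbvil} directly from Proposition~\ref{cbvdsjk} by inserting the Hodge-theoretic values $\gamma=q^{2}$ and $\epsilon=q^{-4}$ supplied by Corollary~\ref{cor1}. No fresh computation of $\Delta$ is required, since the heavy calculation was already carried out in establishing Proposition~\ref{cbvdsjk}; the present proposition is purely a specialisation of that one.

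First I would note the uniform structure of the eight formulae in Proposition~\ref{cbvdsjk}. Each expresses $\Delta$ applied to an $e^{\pm}$-type $1$-form as a sum of a \emph{diagonal} part (which keeps the same $e^{\pm}$ label as the input) and an \emph{off-diagonal} part (which flips $e^{+}\leftrightarrow e^{-}$ and raises or lowers the exponents, producing the $a^{n+2}$, $d^{n-2}$, $b^{m\pm 2}$, $c^{p\pm 2}$ contributions). Inspecting the list shows that every off-diagonal term starting from $e^{+}$ carries the scalar factor $(1-\gamma\,q^{-2})$, while every off-diagonal term starting from $e^{-}$ carries the scalar factor $(1-\epsilon\,q^{4})$. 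Substituting $\gamma=q^{2}$ gives $1-\gamma\,q^{-2}=1-q^{2}q^{-2}=0$, and substituting $\epsilon=q^{-4}$ gives $1-\epsilon\,q^{4}=1-q^{-4}q^{4}=0$. Hence all off-diagonal terms vanish identically and $\Delta$ becomes diagonal in the $e^{\pm}$ basis, which accounts at once for the disappearance of all the complicated exponent-shifting contributions.

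Second I would simplify the surviving diagonal coefficients. For the $e^{+}$ outputs the prefactor $(\beta\,\epsilon+\alpha\,q^{-6})$ becomes $\beta\,q^{-4}+\alpha\,q^{-6}=q^{-4}(\beta+\alpha\,q^{-2})$; absorbing the $q^{-4}$ into the accompanying power of $q$ turns $q^{5-2p-2n}$ into $q^{1-2p-2n}$, and likewise $q^{5-2p}$ into $q^{1-2p}$, reproducing the stated coefficients. For the $e^{-}$ outputs the prefactor $(\beta+\alpha\,\gamma\,q^{-4})$ becomes $\beta+\alpha\,q^{-2}$ (some lines of the statement simply retain it in the unsimplified form $\beta+\alpha\,\gamma\,q^{-4}$, which is equal), and here the power of $q$ is unchanged. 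Collecting these, each line of Proposition~\ref{cbvdsjk} collapses to exactly one line of Proposition~\ref{bvdijlsdbvil}.

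The only real obstacle is bookkeeping rather than analysis: one must verify that the two vanishing factors $(1-\gamma\,q^{-2})$ and $(1-\epsilon\,q^{4})$ sit in front of precisely the off-diagonal terms in all eight cases, so that no diagonal contribution is inadvertently annihilated and no off-diagonal contribution survives. Once this uniform placement is confirmed against the list in Proposition~\ref{cbvdsjk}, together with the two coefficient identities above, the simplification is immediate and the proposition follows.
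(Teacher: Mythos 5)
Your proposal is correct and is exactly the paper's own route: the paper offers no independent computation for Proposition~\ref{bvdijlsdbvil}, but introduces it by saying the Hodge-theoretic values $\gamma=q^{2}$, $\epsilon=q^{-4}$ from Corollary~\ref{cor1} simplify Proposition~\ref{cbvdsjk}, i.e.\ the result is obtained precisely by the specialisation you describe. Your two observations -- that every $e^{+}\leftrightarrow e^{-}$ flipping term carries a factor $(1-\gamma\,q^{-2})$ or $(1-\epsilon\,q^{4})$ which vanishes, and that the surviving prefactors collapse via $\beta\epsilon+\alpha q^{-6}=q^{-4}(\beta+\alpha q^{-2})$ and $\beta+\alpha\gamma q^{-4}=\beta+\alpha q^{-2}$ -- check out line by line against all eight cases.
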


Using Proposition~\ref{bvdijlsdbvil} we can find the eigen-forms and their eigenvalues, almost completing our study of real Hodge theory on the standard Podle\'s sphere:

\begin{theorem} \label{bcyudsaftuyiduy}
The list of eigenfunctions and eigenvalues of $ \Delta $ on 1-forms:
\begin{equation*}
e^{+}\,a^{n}\,b^{n+2}\sum\limits_{r=0}^{p}
q^{(p-r)^{2}}\,  {p \brack r} \, {2n+p+r+2 \brack n+r+2} \,x^{r}.
\end{equation*}
With eigenvalue $ (\beta+\alpha\,q^{-2}) [n+p+2]\,[n+p+1]\,q^{1-2n-2p}.
 $ 
 \begin{equation*}
e^{-}\,a^{n}\,b^{n-2}\sum\limits_{r=0}^{p}
q^{(p-r)^{2}}    \,  {p \brack r} \, {2n+p+r-2 \brack n+r-2}  \,   x^{r}\quad \text{where $ n\geq 2 $}.
\end{equation*}
With eigenvalue $ (\beta+\alpha\,q^{-2}) [n+p]\,[n+p-1]\,q^{5-2n-2p}.
 $
  \begin{equation*}
e^{+}\,d^{n}\,c^{n-2}\sum\limits_{r=0}^{m}
q^{(m-r)(2n+m-r)}   \,  {m \brack r} \, {2n+m+r-2 \brack n+r-2 }  \, 
x^{r}\quad \text{where $ n\geq 2 $}.
\end{equation*}
With eigenvalue $ (\beta+\alpha\,q^{-2}) [n+m]\,[n+m-1]\,q^{5-2n-2m}.
 $
 \begin{equation*}
e^{-}\,d^{n}\,c^{n+2}\sum\limits_{r=0}^{m}
q^{(m-r)(2n+m-r)}   \,  {m \brack r} \, {2n+m+r+2 \brack n+r+2}  \,  x^{r}.
\end{equation*}
With eigenvalue $ (\beta+\alpha\,q^{-2}) [m+n+2]\,[m+n+1]\,q^{5-2m-2n}.
 $
 \begin{equation*}
e^{+}\,b^{2}\sum\limits_{r=0}^{p}
q^{(p-r)^{2}}      \,  {p \brack r} \, {p+r+2 \brack r+2}  \, x^{r}.
\end{equation*}
With eigenvalue $ (\beta+\alpha\,q^{-2}) [p+2]\,[p+1]\,q^{1-2p}.
 $
\begin{equation*}
e^{-}\,c^{2}\sum\limits_{r=0}^{m}
q^{(m-r)^{2}}   \,  {m \brack r} \, {m+r+2 \brack r+2}  \,  x^{r}.
\end{equation*}
With eigenvalue $ (\beta+\alpha\,q^{-2}) [m+2]\,[m+1]\,q^{1-2m}.
 $
  \begin{equation*}
e^{+}\,d\,b\sum\limits_{r=0}^{p}
q^{(p-r)(2+p-r)}   \,  {p \brack r} \, {p+r+2 \brack r+1}  \,  x^{r}.
\end{equation*}
With eigenvalue $ (\beta+\alpha\,q^{-2}) [p+2]\,[p+1]\,q^{1-2p}.
 $
  \begin{equation*}
e^{-}\,a\,c\sum\limits_{r=0}^{m}
q^{(m-r)^{2}}   \,  {m \brack r} \, {m+r+2 \brack r+1}  \,  x^{r}.
\end{equation*}
With eigenvalue $ (\beta+\alpha\,q^{-2}) [m+2]\,[m+1]\,q^{1-2m}.
 $
\end{theorem}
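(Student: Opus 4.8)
The plan is to extract the eigenvalue equation directly from Proposition~\ref{bvdijlsdbvil} and reduce each of the eight families to a triangular eigenvector problem. The crucial structural point is that, after inserting the Hodge values $\gamma=q^{2}$ and $\epsilon=q^{-4}$ from Corollary~\ref{cor1}, the coupling terms present in the general formula of Proposition~\ref{cbvdsjk} (for instance the $e^{-}(1-\gamma\,q^{-2})a^{n+2}\cdots$ term) vanish, so $\Delta$ preserves the sector labelled by a fixed $e^{\pm}$ and a fixed leading word. Take the first family: with $x=bc$ write $u_{r}=e^{+}\,a^{n}\,b^{n+2}\,x^{r}=e^{+}\,a^{n}\,b^{n+2+r}\,c^{r}$. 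Reading the first line of Proposition~\ref{bvdijlsdbvil} with the proposition's indices $(m,p)=(n+2+r,r)$ gives
\begin{eqnarray*}
\Delta\,u_{r} \,=\, A_{r}\,u_{r}+B_{r}\,u_{r-1}\ ,
\end{eqnarray*}
where $A_{r}=(\beta+\alpha\,q^{-2})\,[n+2+r]\,[n+r+1]\,q^{1-2r-2n}$ and $B_{r}=(\beta+\alpha\,q^{-2})\,[n+2+r]\,[r]\,q^{2-2r-2n}$. Thus $\Delta$ restricts to the finite-dimensional span of $u_{0},\dots,u_{p}$ and is bidiagonal (lower-triangular) in this basis, the only off-diagonal term lowering the power of $x$ by one.

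Since $\Delta$ is triangular on this span, its eigenvalues are the diagonal entries $A_{0},\dots,A_{p}$, and the top index yields $A_{p}=(\beta+\alpha\,q^{-2})\,[n+p+2]\,[n+p+1]\,q^{1-2n-2p}$, exactly the eigenvalue claimed for this family. For generic $q$ the $A_{r}$ are pairwise distinct, so the $A_{p}$-eigenspace is one-dimensional, and the eigenform $\phi=\sum_{r=0}^{p}c_{r}u_{r}$ is fixed by collecting the coefficient of $u_{s}$ in $\Delta\phi=A_{p}\,\phi$, which reads $c_{s}A_{s}+c_{s+1}B_{s+1}=A_{p}\,c_{s}$, i.e.
\begin{eqnarray*}
\frac{c_{s+1}}{c_{s}}\,=\,\frac{A_{p}-A_{s}}{B_{s+1}}\ .
\end{eqnarray*}
The boundary behaviour is automatic: $u_{p+1}$ never appears (it would carry ${p \brack p+1}=0$), so the $s=p$ equation holds, and the recurrence terminates at $s=0$.

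It then remains to check that the stated closed form $c_{r}=q^{(p-r)^{2}}\,{p \brack r}\,{2n+p+r+2 \brack n+r+2}$ satisfies this ratio. Forming $c_{r+1}/c_{r}$ collapses the two $q$-binomial coefficients to a ratio of $q^{2}$-integers, so the claim becomes a single identity among $q^{2}$-integers. The only genuine computation is the factorisation of $A_{p}-A_{r}$: since it vanishes at $r=p$ it carries a factor $[p-r]$, and extracting this factor is what matches the numerator of the $q$-binomial ratio. This is precisely the $q^{2}$-integer identity already used to prove Theorem~\ref{Theor2} for functions; indeed the formula for $\Delta$ on these $1$-forms in Proposition~\ref{bvdijlsdbvil} differs from that on functions in Proposition~\ref{prop2} only by shifts of the exponents of $q$ and of the upper $q$-binomial argument, so no new identity is needed. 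I expect this $q$-binomial bookkeeping — establishing the $[p-r]$ factorisation and cancelling it cleanly — to be the main obstacle; everything else is structural.

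Finally, each of the remaining seven families is handled in exactly the same way: it corresponds to one line of Proposition~\ref{bvdijlsdbvil}, is bidiagonal in the appropriate $x^{r}$ basis, its top diagonal entry reproduces the listed eigenvalue, and the listed $q$-binomial sum solves the same type of ratio recurrence. The $e^{-}$ and the $d^{n}$ families use the corresponding leading words ($b^{n-2}$, $c^{n-2}$, $c^{n+2}$), and the restrictions $n\ge 2$ in families two and three are exactly the condition that the lowered base keeps nonnegative powers; families five through eight are the residual low-degree cases where the leading word degenerates. Assembling these verifications completes the proof.
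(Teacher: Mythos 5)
Your proposal is correct and follows essentially the same route as the paper, which presents this theorem as an unproved direct consequence of Proposition~\ref{bvdijlsdbvil}: one reads off the bidiagonal action of $\Delta$ on each of the eight families ($\Delta u_r = A_r u_r + B_r u_{r-1}$ in your notation, with no $e^{\pm}$ mixing since $1-\gamma q^{-2}=1-\epsilon q^{4}=0$) and solves the resulting triangular recurrence, the top diagonal entry giving the stated eigenvalue. The $q$-integer factorisation you defer does go through cleanly: for the first family one finds
\begin{equation*}
A_p-A_r \,=\, (\beta+\alpha\,q^{-2})\,q^{1-2p-2n}\,[p-r]\,[2n+p+r+3]\ ,\qquad
\frac{A_p-A_r}{B_{r+1}} \,=\, q^{1-2(p-r)}\,\frac{[p-r]\,[2n+p+r+3]}{[r+1]\,[n+r+3]}\ ,
\end{equation*}
which is exactly the ratio $c_{r+1}/c_r$ of the stated coefficients, and the remaining seven families check in the same way.
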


\begin{theorem}  \label{bcydisugyu}
For all  $ f\in S_{q}^{2} $, we have 
\begin{equation*}
\Delta(f\,e^{+}\wedge e^{-})= (\Delta \,f)e^{+}\wedge e^{-}\ .
\end{equation*}
Thus the eigenvalues for $ \Delta $ on $ 2 $-forms are the same as those for $ 0 $-forms, and the corresponding eigenfunctions are $ e^{+}\wedge e^{-} $ times the $ 0 $-form eigenfunctions. 
\end{theorem}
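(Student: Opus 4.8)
The plan is to compare the explicit formula for $\Delta$ on $2$-forms (Proposition~\ref{cbvdsjk11}) with the formula for $\Delta$ on functions (Proposition~\ref{prop2}), after inserting the Hodge values $\gamma=q^{2}$ and $\epsilon=q^{-4}$ from Corollary~\ref{cor1}. The crucial algebraic observation is that with these values the scalar coefficient appearing on $2$-forms collapses to the one appearing on functions: since
\[
\beta\,\epsilon+\alpha\,\gamma\,q^{-8}\,=\,\beta\,q^{-4}+\alpha\,q^{-6}\,=\,q^{-4}\,(\beta+\alpha\,q^{-2})\ ,
\]
and $q^{7-2m}=q^{4}\,q^{3-2m}$ (resp.\ $q^{7-2p}=q^{4}\,q^{3-2p}$), every prefactor in Proposition~\ref{cbvdsjk11} becomes exactly the corresponding prefactor in Proposition~\ref{prop2}. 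In fact this is the \emph{only} choice of $\gamma,\epsilon$ for which the coefficients match for all $\alpha,\beta$, which is why the identity is a genuine consequence of Hodge theory rather than a formal accident.

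First I would treat the three families $f=a^{n}b^{m}c^{p}$, $f=d^{n}b^{m}c^{p}$ and $f=b^{m}c^{p}$ separately, since Propositions~\ref{prop2} and~\ref{cbvdsjk11} are stated on these bases. In each case the substitution above shows that, up to the placement of $e^{+}\wedge e^{-}$, the right-hand side of the $2$-form formula reproduces $\Delta f$ as given by the $0$-form formula, term by term.

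The one point requiring care is the side on which $e^{+}\wedge e^{-}$ sits: in Proposition~\ref{cbvdsjk11} it stands to the left of the resulting function, whereas the statement multiplies on the right. I would resolve this using the bimodule relations $e^{\pm}g=q^{\deg g}\,g\,e^{\pm}$ for the generators $g\in\{a,b,c,d\}$, which give, for a homogeneous element $f$ of grade $k$, the commutation rule $f\,(e^{+}\wedge e^{-})=q^{-2k}\,(e^{+}\wedge e^{-})\,f$ (move $f$ past each of $e^{+}$ and $e^{-}$, each step contributing $q^{-k}$). Since $S^{2}_{q}$ consists exactly of the grade-zero elements we have $k=0$, so $e^{+}\wedge e^{-}$ is central with respect to any $f\in S^{2}_{q}$; moreover each lower-order term such as $a^{n}b^{m-1}c^{p-1}$ retains grade zero, so no spurious $q$-power is introduced. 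Hence the two placements agree and $\Delta(f\,e^{+}\wedge e^{-})=(\Delta f)\,e^{+}\wedge e^{-}$ holds on each basis element, and by linearity for all $f\in S^{2}_{q}$.

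The eigenvalue assertion is then immediate: if $\Delta f=\lambda f$ for one of the function eigenvectors of Theorem~\ref{Theor2}, then $\Delta(f\,e^{+}\wedge e^{-})=(\Delta f)\,e^{+}\wedge e^{-}=\lambda\,(f\,e^{+}\wedge e^{-})$, so $f\,e^{+}\wedge e^{-}$ is an eigen-$2$-form with the same eigenvalue, and these exhaust the $2$-form eigenspaces since the $f\,e^{+}\wedge e^{-}$ span $\Omega^{2}S^{2}_{q}$. I expect the only mild obstacle to be the careful verification of the centrality of $e^{+}\wedge e^{-}$ on grade-zero functions; everything else is a direct coefficient comparison enabled by the Hodge values.
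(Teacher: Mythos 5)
Your proposal is correct and follows essentially the same route as the paper, which states the theorem without an explicit proof, its content being precisely the comparison of Proposition~\ref{cbvdsjk11} with Proposition~\ref{prop2} once the Hodge values $\gamma=q^{2}$, $\epsilon=q^{-4}$ of Corollary~\ref{cor1} are inserted, so that $\beta\,\epsilon+\alpha\,\gamma\,q^{-8}=q^{-4}(\beta+\alpha\,q^{-2})$ exactly absorbs the factor $q^{7-2m}=q^{4}\,q^{3-2m}$. Your additional care over the placement of $e^{+}\wedge e^{-}$, via its centrality over the grade-zero subalgebra $S^{2}_{q}$, correctly fills in the one detail the paper leaves implicit.
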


\begin{remark}
Suppose that $(\beta+\alpha\,q^{-2})\neq 0$ and that $q> 0$ is not 1.
Checking Theorems~\ref{bcydisugyu}, \ref{bcyudsaftuyiduy} and \ref{Theor2} shows that the only harmonic forms (i.e.\ solutions of $\Delta\xi=0$) are constants for 0-forms, zero for 1-forms and constants times $e^+\wedge e^-$ for 2-forms. This fits the idea that we have one harmonic form for every de Rham cohomology class. 
The projection to the harmonic forms is taken to kill all the eigen-forms of $\Delta$ with non-zero eigenvalues. 

In terms of differential equations, from (\ref{bvhvbvzz}) we can set the $\extd t$ component of $\extd_\alpha(\xi)$ to be zero. This gives the diffusion or heat equation for forms,
 \begin{eqnarray}\label{bvhvcwbvzz}
\frac{\partial \xi}{\partial t}\,=\, -\,\alpha\,\Delta(\xi)\ ,
\end{eqnarray}
and we take the limit of $\xi$ as $t\to\infty$ and call it the projection applied to $\xi$. 
\end{remark}

To conclude, we should list the fundamental problems which have been solved in this paper by calculation in a particular example, but for which the general solutions in noncommutative geometry are not at all obvious. We should mention a non-problem first - the values of $\alpha$ and $\beta$ in the metric (\ref{vchdksukjf}) were never specified, simply because we never had to. 

\smallskip
1) The interior product: This was defined in (\ref{bjfildavblk}) with unknown values $\gamma,\epsilon$ which were  determined later. There is no general theory on how to do this, or on how to define this operation at all.

\smallskip
2) The Hodge operator: This was given in Definition~\ref{de1} with unknown values $K,L,M,N$ which were later determined using the assumption of some standard formulae for the Hodge operator, and these in turn gave the numerical values for $\gamma,\epsilon$. There is no general theory on how to do this to make (\ref{eq22}) a Hermitian inner product. 

\smallskip
3) The eigen-forms for $\Delta$ span a `dense' set: Classically this is implied by $(1+\Delta)^{-1}$ being a compact operator. We got round this by writing all the eigen-forms explicitly. In the work of Connes \cite{ConnesNCDG} a Dirac operator satisfying a similar compactness condition is a basic building block, but if we start from a DGA it is not so obvious what happens.

\end{document}